\newtcbox{\mybox}[1][]{enhanced, colframe=black, colback=orange!45, 
       nobeforeafter, tcbox raise base, shrink tight, extrude by=1mm, #1}
\newcommand{\beqn}{\begin{equation}}
\newcommand{\eeqn}{\end{equation}}
\newtheorem{remark}{Remark}[section]
\newtheorem{corollary}{Corollary}[section]
\newtheorem{theorem}{Theorem}[section]
\newtheorem{proposition}[theorem]{Proposition}
\def\blackbox{\leavevmode\vrule height 5pt width 4pt depth 0pt\relax}
\newenvironment{proof}{\begin{trivlist}
\item[]\hspace{0cm}{\bf Proof:}
\hspace{0cm} }{\hfill $\blackbox$
\end{trivlist}}
\definecolor{gray1}{gray}{0.25}
\newcommand\restr[2]{{
\left.\kern-\nulldelimiterspace 
#1 
\right|_{#2} 
}}
\newcommand{\mK}{\mathsf{K}}
\begin{document}
\title{Variably Scaled Kernels for the regularized solution of the parametric Fourier imaging problem}

\author{%\ninerm
	A. Volpara\thanks{MIDA, Dipartimento di Matematica, Università di Genova, Italy} 
	\quad A. Lupoli\thanks{Technical University of Munich and Munich Center
for Machine Learning, Germany}
    \quad E. Perracchione\thanks{Dipartimento di Scienze Matematiche \lq\lq Giuseppe Luigi Lagrange\rq\rq, Politecnico di Torino, Italy}\\
{\tt anna.volpara@edu.unige.it; lupa@ma.tum.de}\\ {\tt emma.perracchione@polito.it
}}

\maketitle
%%%%%%%%%%%%%%%%%%%%%%%%%%%%%%%%%%%%%%%%%%%%%%%%%%%%%%%%%%%%%%%%%%%%%%%%%%%%%%%%%%%%%%%%%%%%%%%%%%%%%%%%%%%%%%%%

\section*{Abstract}
We address the problem of approximating parametric Fourier imaging problems via interpolation/ extrapolation algorithms that impose smoothing constraints across contiguous values of the parameter. Previous works already proved that interpolating via Variably Scaled Kernels (VSKs) the scattered observations in the Fourier domain and then defining the sought approximation via the projected Landweber iterative scheme, turns out to be effective. This study provides new theoretical insights, including error bounds in the image space and properties of the projected Landweber iterative scheme, both influenced by the choice of the scaling function, which characterizes the VSK basis. Such bounds then suggest a smarter solution for the definition of the scaling functions. Indeed, by means of VSKs, the information coded in an image reconstructed for a given parameter is transferred during the reconstruction
process to a contiguous parameter value. Benchmark test cases in the field of astronomical imaging, numerically show that the proposed scheme is able to regularize along the parameter direction, thus proving reliable and interpretable results. 
\section{Introduction}
Fourier imaging problems require the solution of the inverse Fourier transform problem when a limited number of noisy Fourier components of the imaging signal are at the disposal \cite{easton2010fourier,khare2015fourier,olson2017applied}. This image reconstruction problem is ill-posed in the sense of Hadamard, and therefore the use of regularization methods is recommended to address its solution \cite{engl1996regularization,bertero2006inverse}. For specific but significant applications, the image formation model is generalized to encode the fact that the imaging device may record sets of the Fourier samples for many different values of a specific parameter. Examples of this parametric Fourier imaging problem are dynamic Magnetic Resonance Imaging, where the parameter is time \cite{gamper2008compressed,sourbron2013classic,lingala2011accelerated,ravishankar2015efficient}, and imaging spectroscopy, where the parameter is energy \cite{prevost2022hyperspectral,zhang2022survey,gary2023new,piana2022hard}.

A possible approach to the Fourier imaging problem is based on interpolation-extrapolation schemes, in which an interpolation algorithm is applied in the spatial frequency domain and an out-of-band extrapolation approach is applied to the interpolated signal to obtain super-resolution effects \cite{larkman2007parallel,tsao2012mri,deng2012optimal,Massone1}. Recent studies \cite{perracchione2021feature,Perracchione2023} realized the interpolation step in the Fourier domain via Variably Scaled Kernels (VSKs) \cite{Bozzini1,vskmpi}, whose definition depends on a scale function. However, these studies did not prove (but claimed) that using the scaling function to mimic the target in the frequency space would lead to more accurate reconstructions in the image space. Further, as far as the parametric problem is concerned, an analysis based just on heuristic considerations and performed in the case of solar space imaging showed that the scale function can be leveraged to transfer knowledge from the image reconstructed at one value of the parameter to the contiguous one, thus realizing regularization along the parameter direction \cite{volpara2024ris}.

The present paper aims to fill the theoretical gaps concerned with these intuitions that, so far, has been validated just numerically. Specifically, the advances of the state of the art presented in our analysis are
\begin{enumerate}
    \item[(i)] \label{item1} $L_2$ error bounds (obtained thanks to the VSK Lebesgue functions) in the image space that depend on the selection of the scaling function in the Fourier domain;
    \item[(ii)] \label{item2} Bounds for the extrapolation iterative scheme that depend again on the selection of the scaling function; 
    \item[(iii)] \label{item3} New solutions for the definition of the VSK scaling function in the context of inverse problems that continuously depend on a parameter.
\end{enumerate}
In particular, thanks to item (i) we have been able to theoretically prove that the closer is the scaling function to the target in the Fourier domain, the smaller is the error in the physical space, which in turn provides useful insights to define a VSK basis tailored for the Fourier imaging problems. Further, the theoretical findings have been validated by considering samples from the {{Reuven Ramaty High Energy Solar Spectroscopic Imager (RHESSI)}} \cite{enlighten1658}, which was a telescope (now dismissed) recording X-rays from the Sun with the main purpose of observing solar flares \cite{piana2022hard}. Within this framework, imaging spectroscopy consists in providing reconstructions of hard X-ray maps at different count energy channels. An application performed using RHESSI data allowed us to confirm a theoretical results that proves the regularization effects obtained by sequentially exploiting the scale function to encode prior knowledge on the image to reconstruct at different values of the count energies.

The paper is organized as follows. Section \ref{sec:intro} is the core of the work and shows the theoretical studies about both error bounds that highlight the dependence of the VSK scaling function and results of the Landweber scheme for specific definitions of such a scaling function. Section \ref{application} confirms the theoretical findings by considering the RHESSI imaging system. Our conclusions are offered in Section \ref{conclusions}.

\section{The VSK interpolation/extrapolation procedure for inverse Fourier transform by limited data}
\label{sec:intro}
Given $g\in L^2(\mathbb{R}^d)\cap\mathcal{C}(\mathbb{R}^d)$ and $f\in L^2(\mathbb{R}^d)$ such that 
\begin{equation}\label{eq:000}
    \mathcal{F}f = g,
\end{equation}
where $\mathcal{F}:  L^2(\mathbb{R}^d) \to  L^2(\mathbb{R}^d)$ is the Fourier operator, we aim to recover $f$ given a few samples of the function $g$. As the proposed scheme is based on interpolating in the Fourier domain and then in extrapolating such approximation, the continuity of the function $g$ is required.  Moreover, for simplicity, until Subsection \ref{param_prob} we limit our attention to non-parametric problems, and we suppose that both $f$ and $g$ are continuous real-valued functions. Any extension to complex function is straightforward by considering the real and imaginary parts separately. 

\subsection{Error analysis for the VSK interpolation/extrapolation procedure}
\label{sec:error}

In practical scenarios, usually only a few measurements $\{g_i\}_{i=1}^{n} \in D \subseteq \mathbb{R}^d$, where $D$ is a compact set, are available. This is the motivation for studying  interpolation/extrapolation approaches. More formally, this means that the problem in \eqref{eq:000} is indeed a discrete Fourier problem, where where $\mathcal{F}= \mathcal{F}  \cdot \chi_D$.

As only scattered measurements $\{g_i\}_{i=1}^{n} \in D \subseteq \mathbb{R}^d$ of the function  $g$ are available,  one may think of interpolating the given data in order to get a continuous function in the left hand side of \eqref{eq:000}. This approach may fall in the general framework of interpolation/extrapolation algorithms, see e.g. \cite{Massone1,sacchi}. We then have to find a continuous interpolating function $P_{g}:=P(g): D \longrightarrow \mathbb{R}$ so that the interpolation conditions
\begin{equation}\label{interpolationproblem}
P_{g}(\boldsymbol{u}_i) = g_i, \quad i=1,\ldots,n,
\end{equation}
hold true. 
By using kernel bases, the interpolation problem \eqref{interpolationproblem} has a unique solution if $P_{g} \in \textrm{span} \{ \kappa_{\gamma}(\cdot,\boldsymbol{u}_i), \boldsymbol{u}_i \in D\}$, where $\kappa_{\gamma} :  D \times  D \longrightarrow \mathbb{R}$ is a strictly  positive definite and radial kernel and $\gamma>0$ is the so-called \emph{shape parameter}, see e.g. \cite{fornberg2011stable, fornberg2004stable} for particular instances concerning its selection. We also remark that for certain choices of the radial kernel $\kappa$ we can associate a continuous function $\varphi_{\gamma}: [0,+\infty)\longrightarrow \mathbb{R}$,  such that
\begin{equation*}
\kappa_{\gamma}(\boldsymbol{w},\boldsymbol{z})=\varphi_{\gamma}(\|\boldsymbol{w}-\boldsymbol{z}\|_2),
\end{equation*}
for all $\boldsymbol{w}, \boldsymbol{z} \in D$. 

In the following we omit the dependence of the kernel on the shape parameter and we now suppose to use the well-known Variable Scaling Kernels (VSK) in their basic form \cite{Bozzini1,vskmpi}. 
The idea behind the VSK method is to augment the dimensionality of the kernel space by introducing a function $\tilde{g}$, which serves as a rough approximation of $g$.

%Hence, for any scaling function $\tilde{g}: D \longrightarrow \mathbb{R}$, the VSK interpolant is defined by

The approach involves an interpolant of the form
$$
P_{g}^{\tilde{g}} (\boldsymbol{u}) := P_{g} (\boldsymbol{u},\tilde{g}(\boldsymbol{u})) = \sum_{i=1}^n c_i \kappa((\boldsymbol{u}_i,\tilde{g}(\boldsymbol{u}_i)),(\boldsymbol{u},\tilde{g}(\boldsymbol{u}_i))),
$$
where $\boldsymbol{c}=(c_1,\ldots,c_n)^{\intercal}$ are so that 
$$
\mK \boldsymbol{c} = \boldsymbol{g}, \quad \textrm{with} \quad \boldsymbol{g}=(g_1,\ldots,g_n)^{\intercal}.
$$
and $\mK_{ij}= \kappa((\boldsymbol{u}_i,\tilde{g}(\boldsymbol{u}_i)),(\boldsymbol{u}_j,\tilde{g}(\boldsymbol{u}_j)))$.
This technique has been proven effective, as it captures both the underlying structure and the shape of the function $\tilde{g}$, facilitating the interpolation process.

Since the samples of $g$ are contained in \(D \subset \mathbb{R}^d\), and we lack information outside of this set, we extend \(P_{g}^{\tilde{g}}\) to $0$ in $\mathbb{R}^d\setminus D$. Specifically, at first, we define:

%As \(P_{g}^{\tilde{g}}\) might be defined on a compact set \(D \subset \mathbb{R}^d\), whose measure is supposed to be finite and denoted by \(\mu(D)\), we extend it to all of \(\mathbb{R}^d\). Precisely, at first we define:
\[
\hat{P}_{g}^{\tilde{g}}(\boldsymbol{u}) :=
\left\{
\begin{array}{ll}
P_{g}^{\tilde{g}}(\boldsymbol{u}),  & \boldsymbol{u} \in D, \\
c, & \text{otherwise},
\end{array}
\right.
\]
being $c>0$. Then we set
\(P_{g}^{\tilde{g}}(\textbf{u}) = \hat{P}_{g}^{\tilde{g}} (\textbf{u}) \cdot \phi(\textbf{u}) \), with
\[
\phi(\boldsymbol{u}) =
\begin{cases}
1, & \boldsymbol{u} \in D, \\
\phi_1(\boldsymbol{u}),  & \boldsymbol{u} \in \mathbb{R}^d \setminus D.
\end{cases}
\]
Here, $\phi_1(\boldsymbol{u})$ is a smooth function that rapidly decreases to $0$, and such that the function $\phi$ is continuous in $\mathbb{R}^d$.
%ensuring a continuous transition at the boundary of $D$.Consequently, $P^{\tilde{g}}_g$ is continuously extended over $\mathbb{R}^d$.%where $\phi_1$ decreases fast and smoothly to $0$.  
%\Lupolicomm{[Maybe we remove this ]} 
Regarding $\tilde{g}$, since there are no issues with its continuity, we extend it by zero-padding to all of $\mathbb{R}^d$, i.e.,
\[
\tilde{g}(\boldsymbol{u}) :=
\left\{
\begin{array}{ll}
\tilde{g}(\boldsymbol{u}),  & {\boldsymbol{u}\in D}, \\
0, & \mbox{otherwise}. 
\end{array}
\right.
\]

Let $\bar{f}$ be the function reconstructed by taking the interpolating function on the left-hand side of \eqref{eq:000} i.e., $\bar{f}$ is so that
$$
({\cal F}\bar{f})(\boldsymbol{u}) = P_{g}^{\tilde{g}} (\boldsymbol{u}). 
$$
Then we are able to show the following generic error estimate. 

\begin{theorem} Assuming that the continuous functions $f,\bar{f},g$ and  ${P}_{g}^{\tilde{g}}$ 
belong to $ L_2 (\mathbb{R}^d)$, the following stability estimate holds true
    \begin{align*}
        \|f-\bar{f}\|_{L_2 (\mathbb{R}^d)} \leq  \nu(\boldsymbol{u}) +\|g-{P}_{g}^{\tilde{g}} \|_{L_2 (\mathbb{R}^d \setminus D)},
    \end{align*}
    where 
    \begin{equation*}
     \nu(\boldsymbol{u}) = \left(  \sup_{\boldsymbol{u} \in \mathbb{R}^d} \left((g(\boldsymbol{u})-P_{g}^{\tilde{g}} (\boldsymbol{u}))^2 \mu(D) \right)^{1/2} \right).
    \end{equation*}
\end{theorem}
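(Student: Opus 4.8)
The plan is to exploit the fact that the Fourier operator $\mathcal{F}$ is an isometry on $L_2(\mathbb{R}^d)$ (Plancherel's theorem), so that the image-space error can be transferred verbatim into the frequency domain, where the interpolation/extrapolation construction actually lives. First I would observe that, by linearity of $\mathcal{F}$ together with the defining relations $\mathcal{F}f = g$ and $\mathcal{F}\bar{f} = P_{g}^{\tilde{g}}$, one has $\mathcal{F}(f - \bar{f}) = g - P_{g}^{\tilde{g}}$ as elements of $L_2(\mathbb{R}^d)$; the standing hypotheses guarantee that both sides are genuinely square-integrable. Applying the Plancherel identity then yields the exact equality
\[
\|f - \bar{f}\|_{L_2(\mathbb{R}^d)} = \|g - P_{g}^{\tilde{g}}\|_{L_2(\mathbb{R}^d)},
\]
which reduces the whole statement to estimating the right-hand frequency-domain residual.

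Next I would split the domain of integration according to the compact sampling region, writing $\mathbb{R}^d = D \cup (\mathbb{R}^d \setminus D)$, so that
\[
\|g - P_{g}^{\tilde{g}}\|_{L_2(\mathbb{R}^d)}^2 = \|g - P_{g}^{\tilde{g}}\|_{L_2(D)}^2 + \|g - P_{g}^{\tilde{g}}\|_{L_2(\mathbb{R}^d \setminus D)}^2 .
\]
Using the elementary subadditivity $\sqrt{a^2+b^2}\le a+b$ for nonnegative $a,b$ converts this into the sum of the two separate norms, matching the additive structure of the claimed bound: the term over $\mathbb{R}^d \setminus D$ is already the extrapolation residual appearing in the statement, while the term over $D$ is what must collapse into $\nu(\boldsymbol{u})$.

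To handle the contribution over $D$, I would estimate the integral by its uniform bound: since $g - P_{g}^{\tilde{g}}$ is continuous and $D$ has finite Lebesgue measure $\mu(D)$,
\[
\|g - P_{g}^{\tilde{g}}\|_{L_2(D)}^2 = \int_D \bigl(g(\boldsymbol{u}) - P_{g}^{\tilde{g}}(\boldsymbol{u})\bigr)^2\, d\boldsymbol{u} \le \mu(D)\,\sup_{\boldsymbol{u}\in D}\bigl(g(\boldsymbol{u}) - P_{g}^{\tilde{g}}(\boldsymbol{u})\bigr)^2 .
\]
Replacing the supremum over $D$ by the (larger) supremum over $\mathbb{R}^d$ only weakens the bound, and taking square roots reproduces exactly the quantity $\nu(\boldsymbol{u})$ as defined. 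Chaining the three displays then gives the asserted estimate.

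The argument is essentially elementary, so there is no deep obstacle; the one point demanding care is the normalization of $\mathcal{F}$, since the clean equality in the first step requires the unitary (Plancherel) convention for the Fourier transform. More generally, one should check that the continuity and $L_2$ hypotheses on $f,\bar{f},g$ and $P_{g}^{\tilde{g}}$ are enough to ensure that the supremum entering $\nu(\boldsymbol{u})$ is finite and that every norm is well defined before the splitting and subadditivity steps are invoked. I would also remark that $\nu(\boldsymbol{u})$ is, despite the notation, independent of $\boldsymbol{u}$ once the supremum is taken, and equals $\sqrt{\mu(D)}\,\|g - P_{g}^{\tilde{g}}\|_{L_\infty(\mathbb{R}^d)}$.
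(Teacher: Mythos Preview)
Your proof is correct and follows essentially the same route as the paper: apply Plancherel to pass from $\|f-\bar f\|_{L_2}$ to $\|g-P_g^{\tilde g}\|_{L_2}$, split the frequency domain into $D$ and $\mathbb{R}^d\setminus D$ via the subadditivity $\sqrt{a^2+b^2}\le a+b$, and bound the $D$-contribution by $\sqrt{\mu(D)}$ times the supremum of the residual. Your additional remarks about the unitary normalization of $\mathcal{F}$ and the fact that $\nu(\boldsymbol{u})$ is actually a constant are well taken and add clarity beyond the paper's terse chain of inequalities.
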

\begin{proof}
    \begin{align*}
        \|f-\bar{f}\|_{L_2 (\mathbb{R}^d)}  & = \|({\cal F}^{-1}g)-({\cal F}^{-1}P_{g}^{\tilde{g}} ) \|_{L_2 (\mathbb{R}^d)}\\ 
        & = \|g-P_{g}^{\tilde{g}} \|_{L_2 (\mathbb{R}^d)} \leq \|g-P_{g}^{\tilde{g}} \|_{L_2 (D)} + \|g-P_{g}^{\tilde{g}} \|_{L_2 (\mathbb{R}^d \setminus D)} \\
        & \leq \left( \sup_{\boldsymbol{u} \in \mathbb{R}^d} \left((g(\boldsymbol{u})-P_{g}^{\tilde{g}} (\boldsymbol{u}))^2 \mu(D) \right)^{1/2} \right) + \|g-P_{g}^{\tilde{g}} \|_{L_2 (\mathbb{R}^d \setminus D)}.
    \end{align*}
\end{proof}

%\Lupolicomm{[Here we have to formulate better. Maybe let's say that this function g is decaying to 0 outside D. Or we have to mention something about the satellites]}
\begin{remark}\label{remark1}
    Considering the above theorem, as in real applications we usually have information on a compact set of the frequency space, we are not usually able to bound $\|g-P_{g}^{\tilde{g}} \|_{L_2 (\mathbb{R}^d \setminus D)}$. 
    Nevertheless, it should be realistically negligible, as the function $g$ should decrease to zero smoothly and rapidly.
    Moreover, we can control the first term and, as expected we would like that the interpolant tends to the function $g$. The interesting fact is that we will now bound such an error by considering its dependence on the scaling function $\tilde{g}$.
\end{remark}

In order to link the interpolation error to the scaling function of the VSKs \cite{Perracchionepolito}, we recall that for any distinct points $\{\boldsymbol{u}_i\}_{i=1}^n$, there exist functions (known as Lagrange or cardinal bases) $\varphi_j \in \textrm{span} \{\kappa(\cdot,\boldsymbol{u}_j), j=1,\ldots,n\}$ so that the interpolant can be written as
\begin{align*}
    P_{g}^{\tilde{g}}(\boldsymbol{u}) = \sum_{j=1}^n g_j \varphi_j(\boldsymbol{u}).
\end{align*}
Then, we get that
    \begin{align} \label{eq:interp}
        \left|\left(g - P_{g}^{\tilde{g}}\right)(\boldsymbol{u})\right| & \leq \|g - P_{\tilde{g}}^{\tilde{g}}\|_{\infty}+ \| \tilde{\textbf{{g}}}-\textbf{g} \|_{\infty} \lambda(\boldsymbol{u}), 
    \end{align}
    where 
    \[
\lambda(\boldsymbol{u}) = \sum_{j=1}^{n} | \varphi_j (\boldsymbol{u}) |,
    \]
are known as Lebesgue functions. 
Indeed, by adding and subtracting $P_{\tilde{g}}^{\tilde{g}}$ in \eqref{eq:interp}, we get
    \begin{align}\label{eq:error}
    \left|\left(g - P_{g}^{\tilde{g}}\right)(\boldsymbol{u})\right| & \leq \left|\left(g - P_{\tilde{g}}^{\tilde{g}}\right)(\boldsymbol{u})\right|+\left|\left(P_{{g}}^{\tilde{g}} - P_{\tilde{g}}^{\tilde{g}}\right)(\boldsymbol{u})\right|.
    \end{align}
    Then, the second term of \eqref{eq:error} can be bounded as
\begin{align*}
\left|\left(P_{{g}}^{\tilde{g}} - P_{\tilde{g}}^{\tilde{g}}\right)(\boldsymbol{u})\right| & = \left|\left(P^{\tilde{g}} ({\tilde{g}}-g)\right)(\boldsymbol{u})\right| \\
& = \sum_{j=1}^n | ({\tilde{g}_j}-g_j) (\boldsymbol{u}) \varphi_j(\boldsymbol{u}) | \leq  \| \tilde{\textbf{{g}}}-\textbf{g} \|_{\infty} \lambda(\boldsymbol{u}).
\end{align*}

Equation \eqref{eq:interp} states that to get rid of the second term on the right-hand side of \eqref{eq:error}, we only need to set the nodal values of $\tilde{g}$ equal to those of $g$. Nevertheless, the first term in \eqref{eq:error}, suggests that the scaling function $\tilde{g}$ should be close to $g$ on the whole domain, as in this way $P_{\tilde{g}}^{\tilde{g}}$ should approach $P_{g}^{\tilde{g}}$ and $g$ in turn. 

We can formally observe that, as already claimed in previous papers, having a scaling function that mimics the target one could be helpful. Indeed, we are able to put some prior information into the kernel basis itself. For instance, if we have to reconstruct many images depending on a given continuous parameter that varies in time or that depends on the hardware instrument, we could use such information to reconstruct the image at the next  parameter range. To see this, one has to investigate how the scaling function depends on the signal formation. 

\subsection{VSK interpolation/extrapolation procedure for parametric imaging problems}
\label{param_prob}

For many kinds of signals, in physics we encounter pairs of functions related by an integral of the form
\begin{align}\label{eq:schr}
    \tilde{g}(\boldsymbol{u},\beta) = \int_{\beta}^{+\infty} w(\boldsymbol{u},b) \eta(\beta, b) \, db,
\end{align}
for a fixed $\boldsymbol{u} \in \mathbb{R}^d$, and where $\beta \in \mathbb{R}$ is a given parameter that might depend on the hardware instrumentation. The reason why such integral transforms are common to many problems relies in the fact that \eqref{eq:schr} generalizes the quantum equivalence of the position and momentum representations
of Schrödinger’s equation (when $\boldsymbol{u}$ represents the space coordinate, and $\eta$ is the Fourier kernel). In other words, $w$ and $\tilde{g}$ are equivalent representations of the system
under study. Hence, the general form of \eqref{eq:schr} can be seen as the common representation of the signal formation in many fields, e.g.,  interferometry, optics, radioastronomy,  nuclear medicine scattering and inverse transport theory. For a fixed $\boldsymbol{u} \in \mathbb{R}^d$, we now suppose that the real-valued functions $g,w$ and $\eta$ are continuous functions and that the product between $w$ and $\eta$ is a Riemann-integrable function. To formally investigate how the scaling function may be selected for parametric imaging problems, we prove the following result. 

\begin{proposition}
    \label{th:g-tildeg}
    Let $\eta$ be  a bounded and Lipschitz continuous function in $\beta$. Let $\eta,w$ and the product of the functions $\eta$ and $w$ be in $L_2 \cap L_1((\beta, +\infty))$. Let $\tilde{g}$ be the scaling function of the VSK interpolant, computed as in \eqref{eq:schr}. %, %i.e.,
%\begin{align*}
%       \tilde{g}(\boldsymbol{u},\beta) :=g(\boldsymbol{u},\beta)= \int_{\beta}^{+\infty} w(\boldsymbol{u},b) \eta(\beta, b) \, db . 
%\end{align*}
Let $\beta_1$ and $\beta_2$ be larger then $\beta \in \mathbb{R}$. If $\beta_1 \longrightarrow \beta_2$ then $|\tilde{g}(\boldsymbol{u},\beta_1)-{\tilde{g}}(\boldsymbol{u},\beta_2) |\longrightarrow 0$. 
\end{proposition}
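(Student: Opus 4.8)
The plan is to establish continuity of $\tilde{g}(\boldsymbol{u},\cdot)$ in the parameter by writing the difference $\tilde{g}(\boldsymbol{u},\beta_1)-\tilde{g}(\boldsymbol{u},\beta_2)$ as a sum of two contributions and bounding each separately. Assume without loss of generality that $\beta_1 < \beta_2$. There are two distinct sources of change: the integrand, which depends on the parameter through $\eta(\beta_i,b)$, and the lower limit of integration, which moves from $\beta_1$ to $\beta_2$. I would disentangle these by adding and subtracting $\int_{\beta_1}^{+\infty} w(\boldsymbol{u},b)\eta(\beta_2,b)\,db$, obtaining
\begin{align*}
\tilde{g}(\boldsymbol{u},\beta_1)-\tilde{g}(\boldsymbol{u},\beta_2) &= \int_{\beta_1}^{+\infty} w(\boldsymbol{u},b)\bigl[\eta(\beta_1,b)-\eta(\beta_2,b)\bigr]\,db \\
&\quad + \int_{\beta_1}^{\beta_2} w(\boldsymbol{u},b)\eta(\beta_2,b)\,db,
\end{align*}
where I have used $\int_{\beta_1}^{+\infty}-\int_{\beta_2}^{+\infty}=\int_{\beta_1}^{\beta_2}$. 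Call these two terms $I_1$ and $I_2$.

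For $I_1$ I would invoke the Lipschitz continuity of $\eta$ in $\beta$: there is a constant $L>0$ with $|\eta(\beta_1,b)-\eta(\beta_2,b)|\leq L|\beta_1-\beta_2|$ uniformly in $b$. Pulling this factor out of the integral gives
\begin{align*}
|I_1| \leq L\,|\beta_1-\beta_2|\int_{\beta_1}^{+\infty}|w(\boldsymbol{u},b)|\,db \leq L\,|\beta_1-\beta_2|\,\|w(\boldsymbol{u},\cdot)\|_{L_1((\beta,+\infty))},
\end{align*}
which tends to $0$ as $\beta_1\to\beta_2$ precisely because $w\in L_1$. For $I_2$ I would use the boundedness of $\eta$ to write $|I_2|\leq \|\eta\|_{\infty}\int_{\beta_1}^{\beta_2}|w(\boldsymbol{u},b)|\,db$; since $w(\boldsymbol{u},\cdot)\in L_1((\beta,+\infty))$, the absolute continuity of the integral (the integral over a shrinking interval tends to zero) forces this term to $0$ as well. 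Combining the two estimates by the triangle inequality yields $|\tilde{g}(\boldsymbol{u},\beta_1)-\tilde{g}(\boldsymbol{u},\beta_2)|\leq |I_1|+|I_2|\to 0$, which is the claim.

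The conceptually delicate step is $I_2$, the contribution from the moving endpoint, because here continuity cannot be inherited from smoothness of the integrand but only from integrability: one must argue that the mass of $w(\boldsymbol{u},\cdot)\eta(\beta_2,\cdot)$ on the vanishing interval $(\beta_1,\beta_2)$ disappears in the limit. This is exactly the absolute-continuity property of the Lebesgue integral, and it is where the $L_1$ hypothesis on $w$ (equivalently on the product $w\eta$) is essential; the $L_2$ assumptions play no role in this particular statement. The term $I_1$, by contrast, is routine once the Lipschitz bound is in hand. I would also remark that boundedness of $\eta$ together with $w\in L_1$ already guarantees $w\eta\in L_1$, so the listed hypotheses are mildly redundant, but this redundancy is harmless for the argument.
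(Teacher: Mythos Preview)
Your argument is correct and matches the paper's approach: decompose the difference into an endpoint contribution over $[\beta_1,\beta_2]$ (handled by boundedness of $\eta$ and $L_1$-integrability of $w$) and a kernel-variation term over a half-line (handled by the Lipschitz hypothesis). The paper adds and subtracts $\int_{\beta_2}^{\infty} w\,\eta(\beta_1,\cdot)\,db$ rather than your $\int_{\beta_1}^{\infty} w\,\eta(\beta_2,\cdot)\,db$, which merely swaps the roles of $\beta_1$ and $\beta_2$ in the two pieces; your use of absolute continuity for the endpoint term is in fact the cleaner justification.
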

\begin{proof}
Without any restriction, we suppose  $\beta_2>\beta_1$. Then for a fixed $\boldsymbol{u} \in \mathbb{R}^d$, we have that:
\begin{align*}
    & |\tilde{g}(\boldsymbol{u},\beta_1)-{\tilde{g}}(\boldsymbol{u},\beta_2)| = \left |\int_{\beta_1}^{\infty} w(\boldsymbol{u},b) \eta(\beta_1, b) \, db-\int_{\beta_2}^{\infty} w(\boldsymbol{u},b) \eta(\beta_2, b) \, db \right |\\
    & =  \left |\int_{\beta_1}^{\beta_2} w(\boldsymbol{u},b) \eta(\beta_1, b) \, db + \int_{\beta_2}^{\infty} w(\boldsymbol{u},b) \eta(\beta_1, b) \, db -\int_{\beta_2}^{\infty} w(\boldsymbol{u},b) \eta(\beta_2, b) \, db \right| \\
    &\leq  \int_{\beta_1}^{\beta_2} \left |w(\boldsymbol{u},b) \eta(\beta_1, b)  \right|db  +\int_{\beta_2}^{\infty} \left |w(\boldsymbol{u},b)  (\eta(\beta_1,b) -\eta(\beta_2,b) ) \right |db \\
    & \leq |\beta_1-\beta_2| \sup_{b \in [\beta_1,\beta_2]} \left( | \eta(\beta_1, b)| \right) \int_{\beta_1}^{\beta_2} \left |w(\boldsymbol{u},b) \right|db\\
    & + \sup_{b \in [\beta_2,\infty]} \left( | \eta(\beta_1,b) -\eta(\beta_2,b)| \right) \int_{\beta_2}^{\infty} \left |w(\boldsymbol{u},b) \right| db \\
    & = |\beta_1-\beta_2| \sup_{b \in [\beta_1,\beta_2]} \left( | \eta(\beta_1, b)| \right) \int_{\beta_1}^{\beta_2} \left |w(\boldsymbol{u},b) \right|db \\
    & +  \left(|\eta(\beta_1,b^*) -\eta(\beta_2,b^*)|\right) \int_{\beta_2}^{\infty} \left |w(\boldsymbol{u},b) \right|db  \\
    & \leq |\beta_1-\beta_2|  \sup_{b \in [\beta_2,\infty]} \left( | \eta(\beta_1, b)| \right) \int_{\beta_1}^{\beta_2} \left |w(\boldsymbol{u},b) \right|db  +  L |\beta_1-\beta_2| \int_{\beta_2}^{\infty} \left |w(\boldsymbol{u},b) \right|db  \\
     & =|\beta_1-\beta_2| \left( \sup_{[\beta_1,\beta_2]} \left( | \eta(\beta_1, b)| \right) \int_{\beta_1}^{\beta_2} \left |w(\boldsymbol{u},b) \right|db + L  \int_{\beta_2}^{\infty} \left |w(\boldsymbol{u},b) \right|db \right).
\end{align*}
In the above inequality chain we have used the Hölder inequality and the fact that, denoting by $T$ either the interval $ [\beta_1,\beta_2] \subset [\beta,\infty]$ or $ [\beta_2,\infty] \subset [\beta,\infty]$, for continuous and bounded functions we have that
\begin{equation*}
    ||\eta(\beta_1, b)||_{L_{\infty}(T)} = {\rm ess} \sup_{b \in T} \left( | \eta(\beta_1, b)| \right) = \sup_{b \in T} \left( | \eta(\beta_1, b)| \right) < + \infty .
\end{equation*}

Then, as $w$ is in $L_1((\beta, +\infty))$, i.e., $||w||_{L_1((\beta, +\infty))}<+\infty$, also $ \int_{\beta_1}^{\beta_2}  |w(\boldsymbol{u},b)| db$ and $\int_{\beta_2}^{\infty} \left |w(\boldsymbol{u},b) \right|db$ are both bounded.  
Hence, all quantities in the last step of the inequality chain are bounded and we can affirm that if $|\beta_1-\beta_2| \longrightarrow 0$ then 
$|\tilde{g}(\boldsymbol{u},\beta_1)-{\tilde{g}}(\boldsymbol{u},\beta_2) |\longrightarrow 0$.
\end{proof}

The above result offers the opportunity to define a scaling function in the VSKs setting which depends on the parameter $\beta$. Hence, supposing to have some information on the sought images in the frequency plane for a given parameter $\beta_2$, we can use it to define the scaling function for reconstructing the target at a sufficiently close parameter $\beta_1$, thus obtaining smoothing constraints in successive reconstructions. In view of this, we now need to investigate how this $\beta$-dependent scaling function affects the inversion step which is carried out via the so-called Landweber iterative scheme. 

\subsection{Analysis of the Landweber scheme}\label{Landweber}

Due to the fact that in applications we would have scattered data only in a compact domain, once the interpolation step has been performed, the plain application of the inverse Fourier transform on the interpolated data would imply significant drawbacks, as ringing effects on the recovered image \cite{lagendijk1988regularized}.
 A possible way to overcome such limitations consists in using a constrained inversion techniques. More formally, if we denote by \(P_{g}^{\tilde{g}} \cdot \chi_D \) the interpolant on $D$ of the scattered observations, then 
\begin{equation*}
    \bar{f}_{\textrm{nb}} ({\boldsymbol{x}}) =  \left({\cal F}^{-1} (P_{g}^{\tilde{g}} \cdot \chi_D \right)) ({\boldsymbol{x}}),
\end{equation*}
is the noisy bandlimited approximation of the function ${f}$. 

For functions $f\in \mathcal{C}_+$, where $\mathcal{C}_+:= \{f\in L^2(\mathbb{R}^d) \ : \ f\ge0 \ \text{a.e}\}$ denotes the closed convex cone of non-negative functions in $L^2(\mathbb{R}^d)$, stable approximation can be computed via the projected Landweber iteration according to the following steps:
\begin{enumerate}
\item Initialize the iterative scheme as ${f}^{(0)} = 0$.
\item For a given threshold $\tau \in(0,2) $ and starting from $f^{(k)}$, compute
\begin{equation*}\label{projection-3}
    {\cal F}({f}^{(k+1)}) =  \tau {\cal F}(\Bar{f}_{\textrm{nb}} ) + (1- \tau  \cdot \chi_D) {\cal F}(\Tilde{f}^{(k)}), \quad k=1,2,\ldots.
\end{equation*}
and apply the non-negativity constraint by means of the projection
\begin{equation*}\label{projection-4}
\tilde{f}^{(k+1)} = {\cal{P}}_+f^{(k+1)},
\end{equation*}
where
\begin{equation*}
    \left({\cal{P}}_+ f^{(k+1)} \right) (\boldsymbol{x})  = \left \{ 
    \begin{array}{cc}
         0, &   \textrm{if} \hskip 0.2cm  \Re{(f^{(k+1)}(\boldsymbol{x}))}<0 ,\\
        f^{(k+1)}(\boldsymbol{x}), & \textrm{otherwise}.
    \end{array} \right.
\end{equation*}
\item  Continue with step 2. until $$\|\tilde{f}^{(k+1)}-\tilde{f}^{(k)}\|_{L^2(\mathbb{R}^d)}\le \delta,$$ where $\delta$ is a given tolerance. Therefore we consider $\tilde{f}:= \tilde{f}^{(k+1)}$ as the approximated solution of $f$.
\end{enumerate}

\begin{remark}
{We highlight that the procedure described above is a \emph{modified} Landweber method,  since it is not a fixed-point method, but depends on the initial value $\bar{f}_{nb}$ (from which the convergence depends). In addition, due to additional projections, iterations may lose the band-limited property even starting from an initially band-limited function. Finally, since operationally we do not have information outside the domain $D$, unlike the standard Landweber method which guarantees convergence throughout $\mathbb{R}^d$, this method can guarantee convergence towards the exact solution only in  the spectral domain $D$.}
\end{remark}

In order to investigate the influence of the $\beta$-dependent definition of the VSK scaling function, a crucial point consists in determining how the final approximated solutions are linked to the interpolants. 

\begin{theorem}\label{th:convergence}
Let $D$ a compact set in $\mathbb{R}^d$, and $\mathcal{P}_{-}$ be defined as:
\[
    \left({\cal{P}}_{-} f^{(k+1)} \right) (\boldsymbol{x})  = \left \{ 
    \begin{array}{cc}
          f^{(k+1)}(\boldsymbol{x}),  &   \textrm{if} \hskip 0.2cm \Re{(f^{(k+1)}(\boldsymbol{x}))}<0 ,\\
      0, & \textrm{otherwise}.
    \end{array} \right.
\]
    For a fixed $\beta$, and a given tolerance $\delta>0$, then  $$\left|\|\mathcal{F}\Bar{f}_{nb}-\mathcal{F}\tilde{f}^{(k)}\|_{L^2(D)}-\|\mathcal{F}\mathcal{P}_-f^{(k+1)}\|_{L^2(\mathbb{R}^d)}\right|\le \delta.$$
\end{theorem}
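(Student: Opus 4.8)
The plan is to collapse the claim into a comparison of two $L^2(\mathbb{R}^d)$ distances, using two structural facts. First, the two thresholdings are complementary: at every point $\boldsymbol{x}$ exactly one of $\Re(f^{(k+1)}(\boldsymbol{x}))<0$ and $\Re(f^{(k+1)}(\boldsymbol{x}))\ge 0$ holds, so $\mathcal{P}_+ f^{(k+1)} + \mathcal{P}_- f^{(k+1)} = f^{(k+1)}$, whence $\mathcal{P}_- f^{(k+1)} = f^{(k+1)} - \mathcal{P}_+ f^{(k+1)} = f^{(k+1)} - \tilde f^{(k+1)}$. Second, $\mathcal{F}$ is unitary on $L^2(\mathbb{R}^d)$ (Plancherel), so that $\|\mathcal{F}\mathcal{P}_- f^{(k+1)}\|_{L^2(\mathbb{R}^d)} = \|f^{(k+1)} - \tilde f^{(k+1)}\|_{L^2(\mathbb{R}^d)}$. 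This turns the second term of the claim into a pure iterate-difference norm.

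Next I would unpack the Landweber update in the Fourier domain. Since $\mathcal{F}\bar f_{\mathrm{nb}} = P_g^{\tilde g}\cdot\chi_D$ is supported on $D$, subtracting $\mathcal{F}\tilde f^{(k)}$ from both sides of $\mathcal{F}f^{(k+1)} = \tau\,\mathcal{F}\bar f_{\mathrm{nb}} + (1-\tau\chi_D)\,\mathcal{F}\tilde f^{(k)}$ gives
\[
\mathcal{F}f^{(k+1)} - \mathcal{F}\tilde f^{(k)} = \tau\,\chi_D\bigl(\mathcal{F}\bar f_{\mathrm{nb}} - \mathcal{F}\tilde f^{(k)}\bigr),
\]
which vanishes off $D$ (there the update reduces to $\mathcal{F}\tilde f^{(k)}$ and the difference drops out). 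Taking the $L^2(\mathbb{R}^d)$ norm, using Plancherel once more, and letting the factor $\chi_D$ collapse the integral onto $D$ yields
\[
\|f^{(k+1)} - \tilde f^{(k)}\|_{L^2(\mathbb{R}^d)} = \tau\,\|\mathcal{F}\bar f_{\mathrm{nb}} - \mathcal{F}\tilde f^{(k)}\|_{L^2(D)},
\]
so the first term of the claim (up to the factor $\tau$) equals the distance between the raw iterate $f^{(k+1)}$ and the previous projected iterate $\tilde f^{(k)}$.

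Finally I would close the loop with the reverse triangle inequality and the stopping rule. Writing $f^{(k+1)}-\tilde f^{(k+1)} = (f^{(k+1)}-\tilde f^{(k)}) - (\tilde f^{(k+1)}-\tilde f^{(k)})$ and applying $\bigl|\,\|a\|-\|b\|\,\bigr|\le\|a-b\|$ together with the stopping criterion $\|\tilde f^{(k+1)}-\tilde f^{(k)}\|_{L^2(\mathbb{R}^d)}\le\delta$ gives
\[
\Bigl|\,\|f^{(k+1)}-\tilde f^{(k+1)}\|_{L^2(\mathbb{R}^d)} - \|f^{(k+1)}-\tilde f^{(k)}\|_{L^2(\mathbb{R}^d)}\,\Bigr| \le \delta,
\]
and substituting the two norm identities of the previous steps produces the stated estimate.

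The step I expect to be the genuine obstacle is the bookkeeping of the relaxation factor $\tau$: the chain above actually yields $\bigl|\,\|\mathcal{F}\mathcal{P}_- f^{(k+1)}\|_{L^2(\mathbb{R}^d)} - \tau\,\|\mathcal{F}\bar f_{\mathrm{nb}} - \mathcal{F}\tilde f^{(k)}\|_{L^2(D)}\,\bigr|\le\delta$, so the claim as written (without $\tau$) is exact only for $\tau=1$; for general $\tau\in(0,2)$ one must either carry the $\tau$ explicitly or absorb the residual $|1-\tau|\,\|\mathcal{F}\bar f_{\mathrm{nb}}-\mathcal{F}\tilde f^{(k)}\|_{L^2(D)}$ into the tolerance. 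The only other point needing care is verifying that $\mathcal{F}\bar f_{\mathrm{nb}}$ is genuinely supported on $D$, which is what makes the off-$D$ part of the update cancel; this follows directly from the definition $\bar f_{\mathrm{nb}}=\mathcal{F}^{-1}(P_g^{\tilde g}\cdot\chi_D)$.
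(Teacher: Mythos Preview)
Your argument is correct and follows essentially the same route as the paper: both use the complementarity $\tilde f^{(k+1)}+\mathcal{P}_-f^{(k+1)}=f^{(k+1)}$, Plancherel, the Landweber update to rewrite $\mathcal{F}f^{(k+1)}-\mathcal{F}\tilde f^{(k)}$ as $\tau\chi_D(\mathcal{F}\bar f_{\mathrm{nb}}-\mathcal{F}\tilde f^{(k)})$, and then the reverse triangle inequality together with the stopping rule. Your observation about the stray $\tau$ is exactly right: the paper's own proof in fact ends with $\bigl|\,\tau\|\mathcal{F}\bar f_{\mathrm{nb}}-\mathcal{F}\tilde f^{(k)}\|_{L^2(D)}-\|\mathcal{F}\mathcal{P}_-f^{(k+1)}\|_{L^2(\mathbb{R}^d)}\,\bigr|\le\delta$, so the discrepancy you flagged is a mismatch between the statement and the proof in the paper itself, not a defect in your argument.
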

\begin{proof}
    Let us suppose  that $\|\tilde{f}^{(k+1)}-\tilde{f}^{(k)}\|_{L^2(\mathbb{R}^d)}\le \delta,$ and denoting by $\tilde{\tilde{f}}:= \mathcal{P}_-f,$ we get that
    \begin{align*}
        &\delta \ge \|\tilde{f}^{(k+1)}-\tilde{f}^{(k)}\|_{L^2(\mathbb{R}^d)}
        = \|\mathcal{F}f^{(k+1)}-\mathcal{F}\tilde{f}^{(k)}-\mathcal{F}\tilde{\tilde{f}}^{(k+1)}\|_{L^2(\mathbb{R}^d)}\\ 
        &= \|\tau\mathcal{F}\Bar{f}_{nb}  -\tau\chi_D\mathcal{F}\tilde{f}^{(k)}-\mathcal{F}\tilde{\tilde{f}}^{(k+1)}\|_{L^2(\mathbb{R}^d)} \\
        & \ge \left|\tau\|\mathcal{F}\Bar{f}_{nb}-\mathcal{F}\tilde{f}^{(k)}\|_{L^2(D)}-\|\mathcal{F}\tilde{\tilde{f}}^{(k+1)}\|_{L^2(\mathbb{R}^d)}\right|.
    \end{align*}
\end{proof}

We are now interested in understanding how the maps recovered considering contiguous parameters are linked to their reconstruction via Landweber.  

\begin{proposition}\label{prop land}
    Given two real parameters $\beta_1 < \beta_2$, we have that  
    \begin{align*}
    \|\Tilde{f}_{\beta_2}-\Tilde{f}_{\beta_1}\|_{L^2(\mathbb{R}^d)} \le &  \|\mathcal{F}f_{\beta_2}^{(k+1)}-\mathcal{F}f_{\beta_1}^{(k+1)}\|_{L^2(\mathbb{R}^d \setminus D)}  + \\
    & \left(1-(1-\tau)^{k+1}\right) \|\mathcal{F}\bar{f}_{nb,\beta_2}-\mathcal{F}\bar{f}_{nb,\beta_1}\|_{L^2(D)} %+ \| \tilde{\tilde{f}}_{\beta_2}^{(k+1)}- \tilde{\tilde{f}}_{\beta_1}^{(k+1)}\|_{L^2(\mathbb{R}^d)} 
    \\
    &+\sum_{l=0}^{k-1} (1-\tau)^{\ell +1}\|\mathcal{F}\tilde{\tilde{f}}_{\beta_2}^{(k-l)}-\mathcal{F}\tilde{\tilde{f}}_{\beta_1}^{(k-l)}\|_{L^2(D)}.
\end{align*}
\end{proposition}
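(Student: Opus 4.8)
The plan is to pass everything to the Fourier domain, where $\mathcal{F}$ is an isometry (Parseval), so that $\|\tilde{f}_{\beta_2}-\tilde{f}_{\beta_1}\|_{L^2(\mathbb{R}^d)}=\|\mathcal{F}\tilde{f}_{\beta_2}-\mathcal{F}\tilde{f}_{\beta_1}\|_{L^2(\mathbb{R}^d)}$ and the whole analysis can be carried out on $\mathcal{F}\tilde{f}^{(k+1)}$. The first structural fact I would exploit is that the two projectors satisfy $\mathcal{P}_{+}h+\mathcal{P}_{-}h=h$ pointwise, so that $\tilde{f}^{(k+1)}=f^{(k+1)}-\tilde{\tilde{f}}^{(k+1)}$ with $\tilde{\tilde{f}}^{(k+1)}:=\mathcal{P}_{-}f^{(k+1)}$; applying $\mathcal{F}$ gives $\mathcal{F}\tilde{f}^{(k+1)}=\mathcal{F}f^{(k+1)}-\mathcal{F}\tilde{\tilde{f}}^{(k+1)}$. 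The second is that $\mathcal{F}\bar{f}_{nb}=P_g^{\tilde{g}}\cdot\chi_D$ is supported in $D$, which is exactly what makes the update behave differently on $D$ and on $\mathbb{R}^d\setminus D$. Accordingly I would split $\mathbb{R}^d=D\cup(\mathbb{R}^d\setminus D)$ and use $\|\cdot\|_{L^2(\mathbb{R}^d)}\le\|\cdot\|_{L^2(D)}+\|\cdot\|_{L^2(\mathbb{R}^d\setminus D)}$ to separate the in-band and out-of-band contributions, the latter producing the leading term $\|\mathcal{F}f^{(k+1)}_{\beta_2}-\mathcal{F}f^{(k+1)}_{\beta_1}\|_{L^2(\mathbb{R}^d\setminus D)}$, where on $\mathbb{R}^d\setminus D$ the factor $(1-\tau\chi_D)$ equals $1$ and $\mathcal{F}\bar{f}_{nb}$ vanishes, so the iteration merely propagates the previous iterate.

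Next, on $D$ the factor $(1-\tau\chi_D)$ collapses to $(1-\tau)$, so the update reads $\mathcal{F}\tilde{f}^{(k+1)}|_D=\tau\,\mathcal{F}\bar{f}_{nb}|_D+(1-\tau)\,\mathcal{F}\tilde{f}^{(k)}|_D-\mathcal{F}\tilde{\tilde{f}}^{(k+1)}|_D$. Letting $\Delta$ denote the difference of the $\beta_2$- and $\beta_1$-quantities and setting $a_k:=\Delta\mathcal{F}\tilde{f}^{(k)}|_D$, subtracting the two recursions yields the linear recursion $a_{k+1}=(1-\tau)\,a_k+\tau\,\Delta\mathcal{F}\bar{f}_{nb}|_D-\Delta\mathcal{F}\tilde{\tilde{f}}^{(k+1)}|_D$ with $a_0=0$ (since $f^{(0)}=0$). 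I would unroll this recursion and collect the $\bar{f}_{nb}$ terms into the geometric sum $\tau\sum_{m=0}^{k}(1-\tau)^m=1-(1-\tau)^{k+1}$, which produces the coefficient $(1-(1-\tau)^{k+1})$ multiplying $\|\Delta\mathcal{F}\bar{f}_{nb}\|_{L^2(D)}$. The projection corrections $\Delta\mathcal{F}\tilde{\tilde{f}}^{(k-l)}$ are carried along with geometric weights $(1-\tau)^{l+1}$, and a triangle inequality on the resulting finite sum gives exactly $\sum_{l=0}^{k-1}(1-\tau)^{l+1}\|\Delta\mathcal{F}\tilde{\tilde{f}}^{(k-l)}\|_{L^2(D)}$. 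Recombining the in-band estimate with the out-of-band term and invoking Parseval once more on the left-hand side returns the claimed bound.

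The main obstacle is the nonlinear, $\beta$-dependent projection $\mathcal{P}_{-}$: unlike the linear part of the modified Landweber map, it does not commute with taking the $\beta$-difference, so $\tilde{\tilde{f}}^{(k)}_{\beta_2}$ and $\tilde{\tilde{f}}^{(k)}_{\beta_1}$ cannot be merged and must survive as the explicit residual sum on the right-hand side. The delicate bookkeeping is therefore to keep straight which contributions live on $D$ (where the geometric damping $(1-\tau)$ acts and the band-limited data $\bar{f}_{nb}$ enter) and which live on $\mathbb{R}^d\setminus D$ (where $\mathcal{F}\bar{f}_{nb}$ vanishes), and to organize the domain-splitting triangle inequalities together with the unrolling so that the stated three groups of terms suffice and no further contribution is generated. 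I also expect the final-step correction coming from $\mathcal{P}_{-}f^{(k+1)}$ to require attention in the index bookkeeping, since it is the term that fixes the starting weight of the geometric series in the residual sum.
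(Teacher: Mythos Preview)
Your plan matches the paper's proof in structure---Parseval, split $\mathbb{R}^d$ into $D$ and its complement, then unroll the in-band recursion into a geometric sum---but diverges at the very first step, and that divergence costs you exactly the ``final-step correction'' you flagged at the end.

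The paper does \emph{not} open with $\mathcal{P}_{+}+\mathcal{P}_{-}=\mathrm{id}$. It uses the non-expansiveness of the projection $\mathcal{P}_{+}$ onto the closed convex cone $\mathcal{C}_{+}$ to pass directly from $\|\tilde{f}_{\beta_2}^{(k+1)}-\tilde{f}_{\beta_1}^{(k+1)}\|_{L^2}$ to $\|f_{\beta_2}^{(k+1)}-f_{\beta_1}^{(k+1)}\|_{L^2}$, and only then applies Parseval and the $D$-splitting. This removes the contribution of $\tilde{\tilde{f}}^{(k+1)}=\mathcal{P}_{-}f^{(k+1)}$ outright. The decomposition $\tilde{f}^{(j)}=f^{(j)}-\tilde{\tilde{f}}^{(j)}$ is invoked only for $j\le k$, inside the in-band recursion, which is why the residual sum in the statement runs over $\tilde{\tilde{f}}^{(k)},\ldots,\tilde{\tilde{f}}^{(1)}$ with weights $(1-\tau)^{1},\ldots,(1-\tau)^{k}$.

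If instead you carry $\mathcal{F}\tilde{f}^{(k+1)}=\mathcal{F}f^{(k+1)}-\mathcal{F}\tilde{\tilde{f}}^{(k+1)}$ through the splitting, two extra pieces appear. On $\mathbb{R}^d\setminus D$ the out-of-band term is $\|\Delta\mathcal{F}\tilde{f}^{(k+1)}\|_{L^2(\mathbb{R}^d\setminus D)}$, not $\|\Delta\mathcal{F}f^{(k+1)}\|_{L^2(\mathbb{R}^d\setminus D)}$ as you wrote, and the difference is precisely $\Delta\mathcal{F}\tilde{\tilde{f}}^{(k+1)}$ restricted to $\mathbb{R}^d\setminus D$. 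On $D$, unrolling your recursion $a_{k+1}=(1-\tau)a_k+\tau\,\Delta\mathcal{F}\bar f_{nb}-\Delta\mathcal{F}\tilde{\tilde{f}}^{(k+1)}$ from $a_0=0$ produces $\sum_{j=0}^{k}(1-\tau)^{j}\|\Delta\mathcal{F}\tilde{\tilde{f}}^{(k+1-j)}\|_{L^2(D)}$, which has one more term (the $j=0$ summand $\|\Delta\mathcal{F}\tilde{\tilde{f}}^{(k+1)}\|_{L^2(D)}$ with weight $1$) than the stated $\sum_{l=0}^{k-1}(1-\tau)^{l+1}\|\Delta\mathcal{F}\tilde{\tilde{f}}^{(k-l)}\|_{L^2(D)}$. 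So your route gives a valid but strictly weaker bound; to land exactly on the proposition's right-hand side, replace your opening identity by the non-expansiveness of $\mathcal{P}_{+}$, after which the rest of your argument goes through verbatim.
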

\begin{proof}
By the iterative scheme, we have that
\begin{align*}
    \|\Tilde{f}_{\beta_2}-\Tilde{f}_{\beta_1}\|_{L^2(\mathbb{R}^d)} & = \|\Tilde{f}_{\beta_2}^{(k+1)}-\Tilde{f}_{\beta_1}^{(k+1)}\|_{L^2(\mathbb{R}^d)} \le \|f_{\beta_2}^{(k+1)}-f_{\beta_1}^{(k+1)}\|_{L^2(\mathbb{R}^d)}\\ %+ \| \tilde{\tilde{f}}_{\beta_2}^{(k+1)}- \tilde{\tilde{f}}_{\beta_1}^{(k+1)}\|_{L^2(\mathbb{R}^d)} 
    & = \|\mathcal{F}f_{\beta_2}^{(k+1)}-\mathcal{F}f_{\beta_1}^{(k+1)}\|_{L^2(\mathbb{R}^d)} \\% + \| \tilde{\tilde{f}}_{\beta_2}^{(k+1)}- \tilde{\tilde{f}}_{\beta_1}^{(k+1)}\|_{L^2(\mathbb{R}^d)} \\
    & \le \|\mathcal{F}f_{\beta_2}^{(k+1)}-\mathcal{F}f_{\beta_1}^{(k+1)}\|_{L^2(\mathbb{R}^d \setminus D)} +\|\mathcal{F}f_{\beta_2}^{(k+1)}-\mathcal{F}f_{\beta_1}^{(k+1)}\|_{L^2(D)} \\
   % & + \| \tilde{\tilde{f}}_{\beta_2}^{(k+1)}- \tilde{\tilde{f}}_{\beta_1}^{(k+1)}\|_{L^2(\mathbb{R}^d)}  \\ 
    & \le \|\mathcal{F}f_{\beta_2}^{(k+1)}-\mathcal{F}f_{\beta_1}^{(k+1)}\|_{L^2(\mathbb{R}^d \setminus D)} +  \tau \|\mathcal{F}\bar{f}_{nb,\beta_2}-\mathcal{F}\bar{f}_{nb,\beta_1}\|_{L^2(D)} \\
    & +(1-\tau)\|\mathcal{F}\tilde{f}_{\beta_2}^{(k)}-\mathcal{F}\tilde{f}_{\beta_1}^{(k)}\|_{L^2(D)}% + \| \tilde{\tilde{f}}_{\beta_2}^{(k+1)}- \tilde{\tilde{f}}_{\beta_1}^{(k+1)}\|_{L^2(\mathbb{R}^d)}  
    \\
    & \le \|\mathcal{F}f_{\beta_2}^{(k+1)}-\mathcal{F}f_{\beta_1}^{(k+1)}\|_{L^2(\mathbb{R}^d \setminus D)} +  \tau \|\mathcal{F}\bar{f}_{nb,\beta_2}-\mathcal{F}\bar{f}_{nb,\beta_1}\|_{L^2(D)} \\
    &  +(1-\tau)\|\mathcal{F}f_{\beta_2}^{(k)}-\mathcal{F}f_{\beta_1}^{(k)}\|_{L^2(D)} +(1-\tau)\|\mathcal{F}\tilde{\tilde{f}}_{\beta_2}^{(k)}-\mathcal{F}\tilde{\tilde{f}}_{\beta_1}^{(k)}\|_{L^2(D)}  %+ \| \tilde{\tilde{f}}_{\beta_2}^{(k+1)}- \tilde{\tilde{f}}_{\beta_1}^{(k+1)}\|_{L^2(\mathbb{R}^d)}  
    \\  
 &      = \|\mathcal{F}f_{\beta_2}^{(k+1)}-\mathcal{F}f_{\beta_1}^{(k+1)}\|_{L^2(\mathbb{R}^d \setminus D)} +  \tau \|\mathcal{F}\bar{f}_{nb,\beta_2}-\mathcal{F}\bar{f}_{nb,\beta_1}\|_{L^2(D)} %+ \| \tilde{\tilde{f}}_{\beta_2}^{(k+1)}- \tilde{\tilde{f}}_{\beta_1}^{(k+1)}\|_{L^2(\mathbb{R}^d)} 
 \\ 
 &  +(1-\tau) \tau \| \mathcal{F}\bar{f}_{nb,\beta_2}-\mathcal{F}\bar{f}_{nb,\beta_1}\|_{L^2(D)} +(1-\tau)\|\mathcal{F}\tilde{\tilde{f}}_{\beta_2}^{(k)}-\mathcal{F}\tilde{\tilde{f}}_{\beta_1}^{(k)}\|_{L^2(D)}  \\
 &  + (1-\tau)^2\| \mathcal{F} \tilde{f}^{(k-1)}_{\beta_2}- \mathcal{F} \tilde{f}^{(k-1)}_{\beta_1}\|_{L^2(D)}
 \\  
    & \le \dots \le \|\mathcal{F}f_{\beta_2}^{(k+1)}-\mathcal{F}f_{\beta_1}^{(k+1)}\|_{L^2(\mathbb{R}^d \setminus D)}+ \\ & \left(\tau \sum_{l = 0}^{k} (1-\tau)^l\right) \|\mathcal{F}\bar{f}_{nb,\beta_2}-\mathcal{F}\bar{f}_{nb,\beta_1}\|_{L^2(D)} \\
    & %+ \| \tilde{\tilde{f}}_{\beta_2}^{(k+1)}- \tilde{\tilde{f}}_{\beta_1}^{(k+1)}\|_{L^2(\mathbb{R}^d)} 
    +\sum_{l=0}^{k-1} (1-\tau)^{\ell +1}\|\mathcal{F}\tilde{\tilde{f}}_{\beta_2}^{(k-l)}-\mathcal{F}\tilde{\tilde{f}}_{\beta_1}^{(k-l)}\|_{L^2(D)}\\
    & = \|\mathcal{F}f_{\beta_2}^{(k+1)}-\mathcal{F}f_{\beta_1}^{(k+1)}\|_{L^2(\mathbb{R}^d \setminus D)}  +\\ & \left(1-(1-\tau)^{k+1}\right) \|\mathcal{F}\bar{f}_{nb,\beta_2}-\mathcal{F}\bar{f}_{nb,\beta_1}\|_{L^2(D)} %+ \| \tilde{\tilde{f}}_{\beta_2}^{(k+1)}- \tilde{\tilde{f}}_{\beta_1}^{(k+1)}\|_{L^2(\mathbb{R}^d)} 
    \\
    & +\sum_{l=0}^{k-1} (1-\tau)^{\ell +1}\|\mathcal{F}\tilde{\tilde{f}}_{\beta_2}^{(k-l)}-\mathcal{F}\tilde{\tilde{f}}_{\beta_1}^{(k-l)}\|_{L^2(D)}.
\end{align*}
\end{proof}

{Proposition \ref{prop land}} states that we can bound the difference between solutions at different parameter values by the $L^2$ norm of the interpolants in  $D$. Specifically, if we use a scaling function defined thanks to contiguous values of $\beta$, we are essentially modeling our approximation by imposing constraints, i.e. ensuring a lower variation norm. 

Considering the above proposition, as stated in Remark \ref{remark1}, in real applications we usually have no information outside a compact Fourier domain, so we reasonably suppose that  $\|\mathcal{F}f_{\beta_2}^{(k+1)}-\mathcal{F}f_{\beta_1}^{(k+1)}\|_{L^2(\mathbb{R}^d \setminus D)}$ is \emph{small}. Nevertheless, we can bound the second and third term, inside the compact Fourier domain, and as far as the last term is concerned, we again expect that its contribution is negligible outside $D$.  

\section{Applications to astronomical imaging}
\label{application}

In this section, we focused on solar hard X-ray imaging and, specifically, we test
our interpolation/extrapolation procedure in the framework of the NASA RHESSI mission (\cite{enlighten1658}). %(\cite{lin2003reuven}). 

From now on, we will denote the function $g$ with $V$ and $f$ with $I$, accordingly to the notations typically applied in this astronomical imaging scenario. The main purpose in the RHESSI system is to approximate the image of the unknown source flux emitted during a solar flare given its the Fourier transform, namely $V$, i.e. the imaging problem reads as: 
\begin{equation*}\label{map-visibilities}
 V(\boldsymbol{u}) =  ({\cal F} I )(\boldsymbol{u}) =\int_{\mathbb{R}^2}  I(\boldsymbol{x})  {\rm e}^{2\pi \imath(\boldsymbol{x}\cdot \boldsymbol{u})} \,d\boldsymbol{x} \, .
\end{equation*}
More precisely, the data provided by RHESSI consists of a set of scattered observations, called visibilities, associated with experimental measurements of the Fourier transform of the incoming photon flux, sampled at specific points (associated to the hardware detectors) in the spatial frequency plane lying on circles characterized by increasing radii (see Figure \ref{fig:rhessicoverage}). 
\begin{figure}[htbp]
    \centering
    \includegraphics[width=0.3\linewidth]{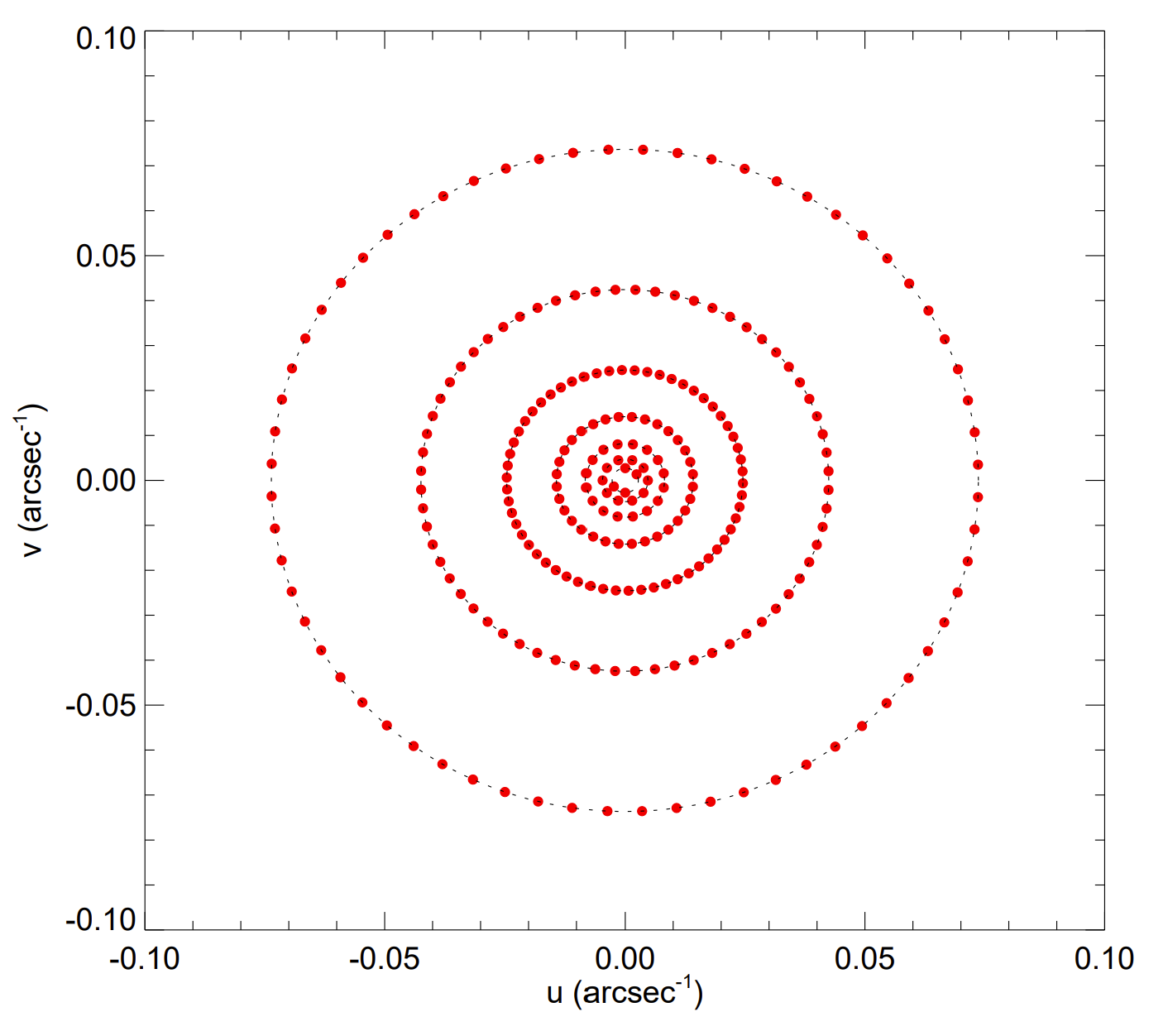}
    \caption{Frequency points sampled by the
RHESSI instrument. The sampled points are located on 7 circles (only detectors 3 to 9 are used), with increasing radii. The number of samples is about $250$; it depends on the signal to noise ratio and changes flare to flare.}
    \label{fig:rhessicoverage}
\end{figure}

The RHESSI visibilities are measured at specific energy channels, i.e., such energy ranges will play the role of $\beta$ in \eqref{eq:schr}. In this specific context, \eqref{eq:schr} relates the photon and electron visibilities, indeed as shown in \cite{piana2007electron}, for a fixed $\boldsymbol{u}$, the link between $V(\boldsymbol{u};\epsilon)$ and $w(\boldsymbol{u};e)$, which respectively are the photon and electron visibilities, is given by:
\begin{equation*}\label{V-W}
V(\boldsymbol{u};\epsilon)=\frac{1}{4\pi R^2} \int_{e=\epsilon}^{\infty} w(\boldsymbol{u};e)\eta(\epsilon,e) 
\,de \, ,
\end{equation*}
where $R$ is the distance from the source to the instrument,  $\epsilon$ and $e$ are {photon} and {electron energy, respectively}, and $\eta(\epsilon,e)$ is the bremsstrahlung cross-section, that gives the probability that a photon whose energy is $\epsilon$ is emitted by an electron whose energy is $e \ge \epsilon$. Considering the nature of the signal we are working with, i.e. photon counts, the solution map is always non-negative, and this implies that the positivity constraints in the Landweber iterations works in this setting too.

\begin{corollary}
   Under the same assumptions of Proposition \ref{th:g-tildeg}, and fixing $e>\epsilon_2>\epsilon_1$, if $\epsilon_1 \longrightarrow \epsilon_2$ then $|V(\boldsymbol{u},\epsilon_1)-V(\boldsymbol{u},\epsilon_2)| \longrightarrow 0$.
\end{corollary}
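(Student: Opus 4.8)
The plan is to read the corollary as a direct specialization of Proposition~\ref{th:g-tildeg} to the photon--electron visibility relation, so that essentially no new analysis is required beyond a translation of symbols and a check that the hypotheses carry over. First I would set up the dictionary between the two settings. Comparing
\[
V(\boldsymbol{u};\epsilon)=\frac{1}{4\pi R^2} \int_{e=\epsilon}^{\infty} w(\boldsymbol{u};e)\,\eta(\epsilon,e)\,de
\]
with \eqref{eq:schr}, the photon visibility $V(\boldsymbol{u};\epsilon)$ plays the role of the scaling function $\tilde{g}(\boldsymbol{u},\beta)$, the photon energy $\epsilon$ plays the role of the parameter $\beta$, the electron energy $e$ plays the role of the integration variable $b$, the electron visibility $w(\boldsymbol{u};e)$ plays the role of $w(\boldsymbol{u},b)$, and the bremsstrahlung cross-section $\eta(\epsilon,e)$ plays the role of $\eta(\beta,b)$. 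The only structural difference is the constant prefactor $1/(4\pi R^2)$, which I would simply carry through the estimate as a harmless multiplier.

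Second, I would verify that the hypotheses of Proposition~\ref{th:g-tildeg} transfer to this physical setting. Concretely, the cross-section $\eta$ must be bounded and Lipschitz continuous in $\epsilon$ on the admissible range $e\ge\epsilon$, and the electron visibility $w$ together with the product $w\eta$ must lie in $L_2\cap L_1((\epsilon,+\infty))$. These are precisely the standing assumptions recalled immediately before the corollary (``under the same assumptions of Proposition~\ref{th:g-tildeg}''), so I would regard them as inherited and proceed.

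Third, with the correspondence in place, I would invoke Proposition~\ref{th:g-tildeg} verbatim. For $\epsilon_1<\epsilon_2$ it yields a bound of the shape
\[
|V(\boldsymbol{u},\epsilon_1)-V(\boldsymbol{u},\epsilon_2)|\le \frac{|\epsilon_1-\epsilon_2|}{4\pi R^2}\left(\,\sup_{[\epsilon_1,\epsilon_2]}|\eta(\epsilon_1,e)|\int_{\epsilon_1}^{\epsilon_2}|w(\boldsymbol{u},e)|\,de + L\int_{\epsilon_2}^{\infty}|w(\boldsymbol{u},e)|\,de\right),
\]
where $L$ is the Lipschitz constant of $\eta$ in $\epsilon$. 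Since $w\in L_1((\epsilon,+\infty))$ both integrals are finite and $\eta$ is bounded, the right-hand side is a fixed multiple of $|\epsilon_1-\epsilon_2|$; letting $\epsilon_1\to\epsilon_2$ therefore drives the difference to zero, which is exactly the assertion. The condition $e>\epsilon_2>\epsilon_1$ in the statement I would read as fixing the admissible ordering (the cross-section is supported on $e\ge\epsilon$), consistent with the requirement $\beta_1,\beta_2>\beta$ in the proposition.

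The main obstacle is not mathematical but one of modeling: the single point that needs genuine care is confirming that the bremsstrahlung cross-section $\eta(\epsilon,e)$ really does satisfy the boundedness and Lipschitz-in-$\epsilon$ requirements uniformly on the relevant energy window, since near the kinematic threshold $e=\epsilon$ the cross-section behaves delicately. Once that regularity is granted, everything else is a faithful copy of the proof of Proposition~\ref{th:g-tildeg}, and the corollary follows at once.
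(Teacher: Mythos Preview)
Your proposal is correct and follows essentially the same route as the paper: set up the dictionary with Proposition~\ref{th:g-tildeg}, absorb the constant $1/(4\pi R^2)$, and read off the conclusion. The only cosmetic difference is that the paper stops at the intermediate two-integral splitting and then invokes \cite{massone2008regularized} to dispatch the term $\int_{\epsilon_1}^{\epsilon_2}|w(\boldsymbol{u},e)\eta(\epsilon_1,e)|\,de\to 0$, whereas you carry the full $|\epsilon_1-\epsilon_2|$ bound through from the proposition; both arguments arrive at the same place.
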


\begin{proof}
Following the same steps of Proposition \ref{th:g-tildeg}, we get:
\begin{equation*}
\begin{split}
  |V(\boldsymbol{u},\epsilon_1)-V(\boldsymbol{u},\epsilon_2)| & \leq \dfrac{1}{4 \pi R^2}  \left ( \int_{e=\epsilon_1}^{\epsilon_2} \left |w(\boldsymbol{u}, e) \eta(\epsilon_1,e) \right|de \right. \\
  & \left. + \int_{e=\epsilon_2}^{\infty} \left |w(\boldsymbol{u}, e) || \eta(\epsilon_1,e) -\eta(\epsilon_2,e)  \right |de \right ).
\end{split}
\end{equation*}
Then, as shown in \cite{massone2008regularized}, if $\epsilon_1 \longrightarrow \epsilon_2$, then $\int_{\epsilon_1}^{\epsilon_2} |w(\boldsymbol{u}, e) \eta(\epsilon_1,e)|de \longrightarrow 0$, and the thesis follows.
\end{proof}

This suggests to use as scaling function dependent on the energy range. Precisely, let us suppose to have a reconstructed image $\tilde{I}_{\epsilon_2}:=\tilde{I}_{\epsilon_2}(\cdot, \epsilon_2)$ at the energy range $\epsilon_2$. Then, our interpolant at the sought energy range $\epsilon_1$ 
will be constructed with VSKs, and as scaling function $\tilde{V}_{\epsilon_2}:= \tilde{V}(\cdot, \epsilon_2)$ we take: 
$$\tilde{V}_{\epsilon_2} (\boldsymbol{u}) =  ({\cal F}\tilde{I}_{\epsilon_2})(\boldsymbol{u}).$$
Then, we recover $\tilde{I}_{\epsilon_1}$ by approximating via projected Landweber iterative scheme the following inverse problem:
$$
P_{V_{\epsilon_1}}^{\tilde{V}_{\epsilon_2}}(\boldsymbol{u}) = ({\cal F}\tilde{I}_{\epsilon_1})(\boldsymbol{u}).
$$
%\textcolor{red}{EMMA, ricordati di citare \cite{volpara2024ris} volpara ris e epsilon vs varepsilon}.

\subsection{Numerical results}
We test our procedure on a flare observed by RHESSI on July 03 2002, considering the time window between 02:10:13 and 02:11:13 UT. RHESSI spectral resolution was $1-2$ keV below $1$ MeV \cite{piana2022hard}, therefore, we extend what discussed in the previous sections, regarding the $\beta$ parameter connected to the hardware parameter, to sequencies of contiguous energy bins. Specifically, we have considered $2$ $3$-keV-wide channels, starting from $25-28$ keV, and $9$ $2$-keV-wide channels, from $20-22$ keV to $4-6$ keV. For this event RHESSI provided $n = 248$ visibilities, at each energy channel. Figure \ref{fig:highlow} shows the reconstruction of the event in the considered energy intervals, using the interpolation/extrapolation procedure. The reconstruction used to trigger the scheme is the one provided by the Maximum Entropy Method MEM\_GE (\cite{Massa_2020}), in the energy range $25-28$ keV. From the energy range $22-25$ keV, each map is obtained by using as scaling function in the interpolation step the discrete Fourier trasform of the map retrieved in the previous (higher) energy channel. Therefore, given a succession of energy channels $\{\epsilon_{\ell}\}_{\ell=1}^L$, where $\epsilon_{\ell+1} < \epsilon_{\ell}$, we consider a sequence of scaling functions, $\{\tilde{V}_{\epsilon_{\ell}}\}_{\ell=1}^L$, and interpolants, to iteratively approximate $\tilde{I}_{\epsilon_{\ell+1}}$ via the relation:
\[
P_{V_{\epsilon_{\ell+1}}}^{\tilde{V}_{\epsilon_{\ell}}}(\boldsymbol{u}) = ({\cal F}\tilde{I}_{\epsilon_{\ell+1}})(\boldsymbol{u})
\]
where 
\[
\tilde{V}_{\epsilon_{\ell}} (\boldsymbol{u}) =  ({\cal F}\tilde{I}_{\epsilon_{\ell}})(\boldsymbol{u}), \quad \ell = 1, \ldots, L.
\]
We refer the reader to \cite{volpara2024ris} for further details.
%The process stops when the signal-to-noise ratio is too low to obtain reliable reconstructions. 

\begin{figure}[htbp]
\centering
\includegraphics[width=1\textwidth]{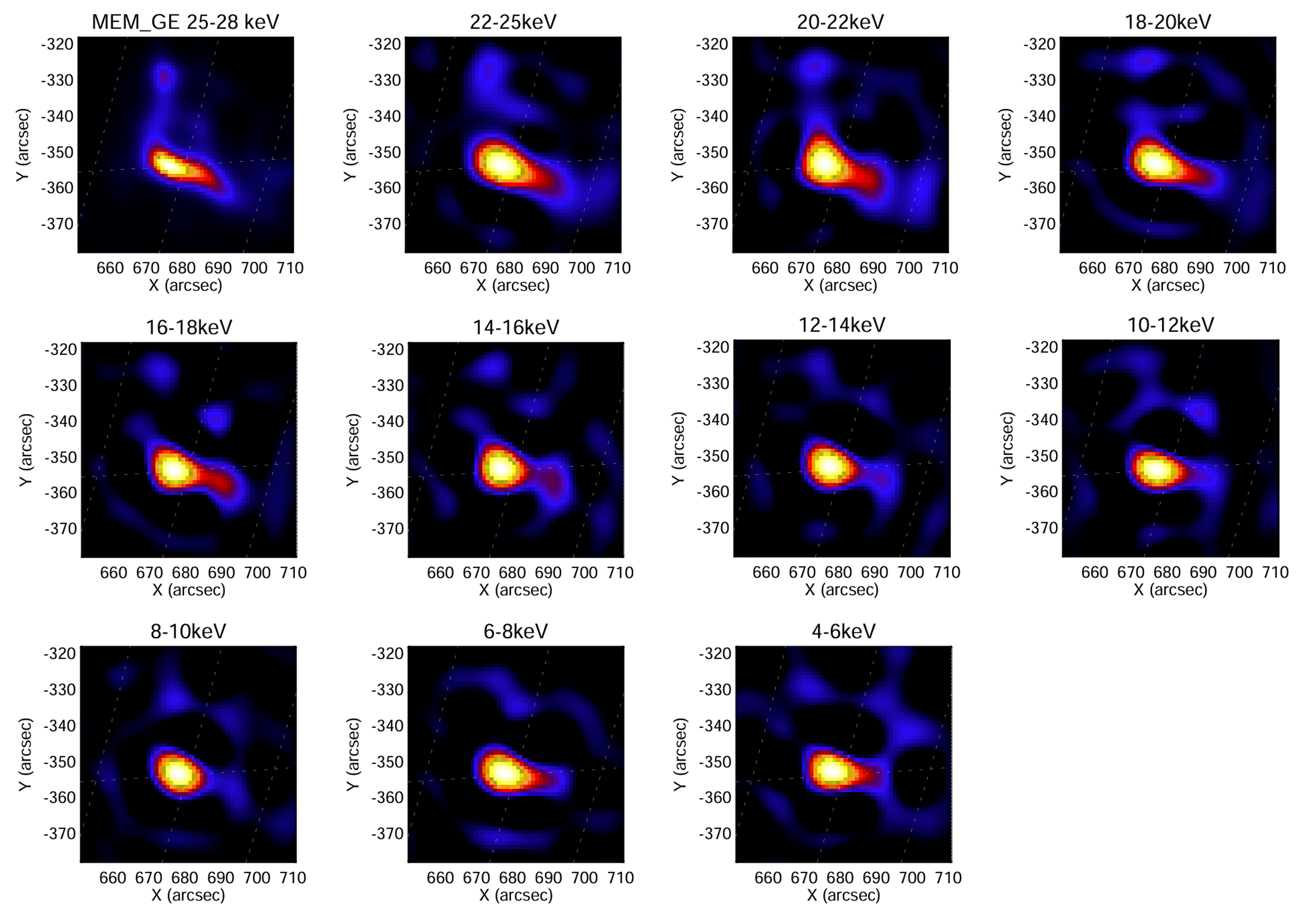}
\caption{Reconstructions of July 03, 2002 event, in the time range 02:10:13 -02:11:13 UT for the energy intervals shown, obtained by the interpolation/extrapolation iterative method, using the MEM\_GE reconstruction at the top left panel as triggering image.}\label{fig:highlow}
\end{figure}

An advantage of this approach is that visibilities in continuous energy channels are correlated. Thus, as shown in Figure \ref{fig:spectra}, regularized photon spectra can be obtained: local count spectra derived from the regularized maps shown in Figure \ref{fig:highlow} are compared with those obtained from maps reconstructed using MEM\_GE, when apply independently at each energy channel. The interpolation/extrapolation procedure ensures numerical stability and reduces uncertainties (obtained by a confidence strip approach), as evidenced by the black error bars, which are consistently smaller than those associated with the unregularized (uncorrelated) spectra (in pink). 

\begin{figure}
    \centering
    \includegraphics[width=0.9\linewidth]%{Figures/03Jul2002/selected_pixels_spectra.png}
    {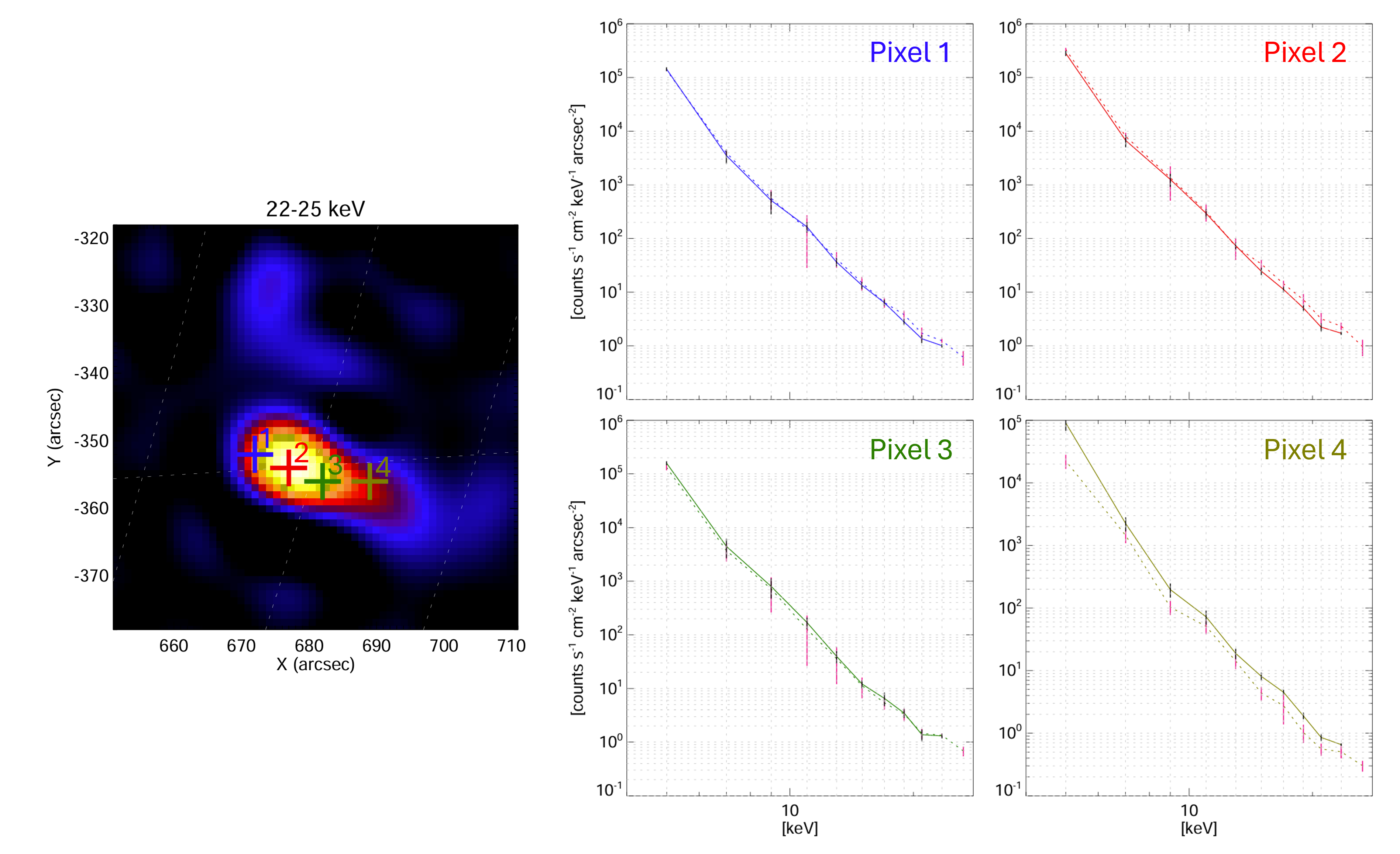}
    \caption{Left panel: selected pixels are indicated with colored crosses. Right panel: corresponding pixel-wise spectra obtained from photon images, in the case of iterative interpolation/extrapolation precedure (solid line) and MEM\_GE (dashed line). The pixels selected in the left panel and their respective spectra are indicated with the same color.}
    \label{fig:spectra}
\end{figure}

Finally, the theoretical results given in Subsection \ref{Landweber} are confirmed by Figures \ref{stoprule} and \ref{prop}, obtained by setting the relaxation parameter for Landweber iteration $\tau =0.2$, and the shape parameter for the RBF $\gamma = 0.02$. Specifically, each panel of Figure \ref{stoprule} corresponds to a specific energy channel. %Figure \ref{stoprule} shows the numerical evidence related to Theorem \ref{th:convergence}. Each panel is linked to an energy channel. 
For each iteration, the red line shows the global distance between two subsequent steps of the Landweber iterative scheme, while the black line shows the global distance between the maps at each iteration of the Landweber process and the initial interpolants, taking into account the contribution related to $\mathcal{P}_-I^{(k+1)}$, as defined in the left-hand side of Theorem \ref{th:convergence}. 

Figure \ref{prop} shows the result established in Proposition \ref{prop land}. It is important to note that no information is available outside the compact Fourier domain $D$, and thus, the first term on the right-hand side is not taken into account. Additionally, the same figure demonstrates that the negative contribution in the Landweber scheme (associated with $\mathcal{P}_-I^{(k+1)}=\tilde{\tilde{I}}^{(k+1)}$) is negligible compared to the map obtained after applying the positivity constraint (i.e., $\tilde{I}^{(k+1)}$), blue line in Figure.

\begin{figure}[htbp]
\centering
\includegraphics[width=1\textwidth]
{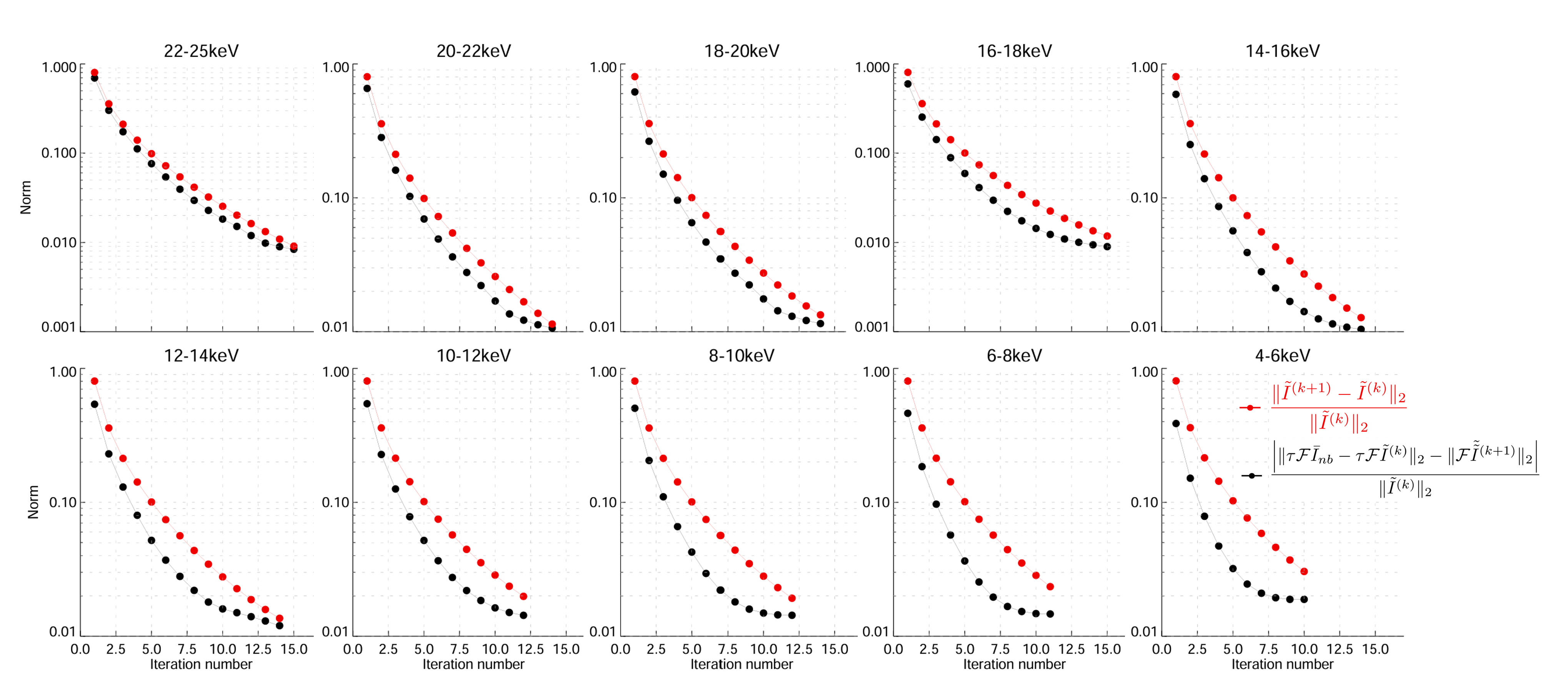}
\caption{Numerical results for Theorem \ref{th:convergence} in the case of RHESSI data. 
%Red lines show the $2$-norm of the pixel-wise distance between maps obtained from two consecutive iterations of projected Landweber scheme. Black lines show the $2$-norm of the distance between the maps at each iteration of the Landweber process and the initial interpolants. 
}\label{stoprule}
\end{figure}

\begin{figure}[htbp]
\centering
\includegraphics[width=0.9\textwidth]{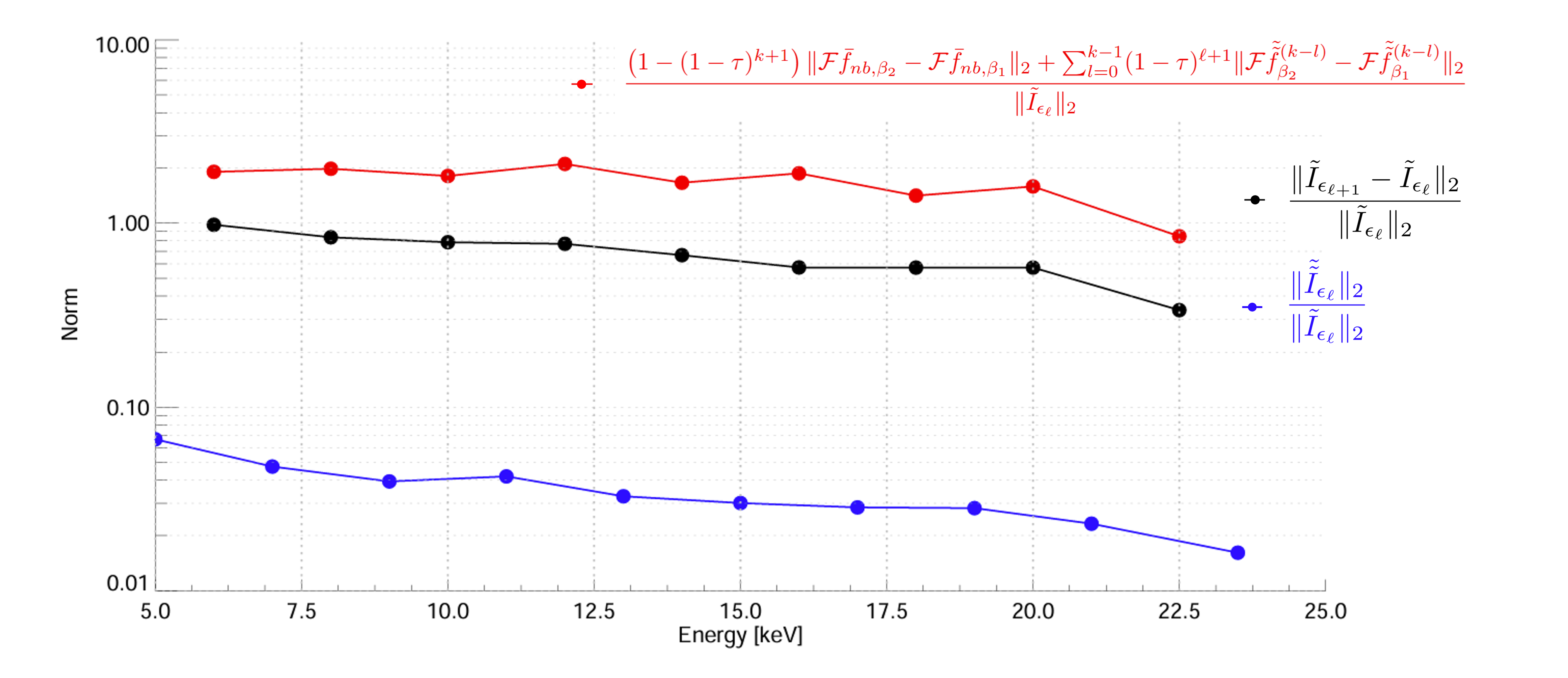}
\caption{Numerical results for the Proposition \ref{prop land} in the case of RHESSI data. }\label{prop}
\end{figure}

\section{Conclusions}
\label{conclusions}

In this study, we have advanced the theoretical understanding of Fourier transform inversion from limited data by employing Variably Scaled Kernels (VSKs). The $L_2$ error bounds we established, which are linked to the choice of the scaling function, provide a solid theoretical foundation for selection of the VSK basis. 
Additionally, our findings on the Landweber iterative scheme emphasize the importance of selecting an appropriate scaling function, which can be tailored to specific hardware or data conditions. For instance, in the case of RHESSI, we select a VSK basis that depends on the energy ranges, allowing for smoother reconstructions along the energy levels.

These insights could guide the development of more refined VSK-based algorithms for sparser imaging systems, especially given the growing interest in nano-satellites that provide only a few observations. Overall, this work lays the groundwork for further exploration of VSKs in a range of inverse problem settings, such as medical imaging.

\section*{Acknowledgments}
Anna Volpara and Emma Perracchione kindly acknowledge the support of the Fondazione Compagnia di San Paolo within the framework of the Artificial Intelligence Call for Proposals, AIxtreme project (ID Rol: 71708) and the GNCS-IN$\delta$AM PIANIS project ``Problemi Inversi e Approssimazione Numerica in Imaging Solare''. Anna Volpara is also supported by the ``Accordo ASI/INAF Solar Orbiter: Supporto scientifico per la realizzazione degli strumenti Metis, SWA/DPU e STIX nelle Fasi D-E''. Emma Perracchione further acknowledge the support of the GOSSIP project (``Greedy Optimal Sampling for Solar Inverse Problems'') – funded by the Ministero dell’Universit\`a e della Ricerca (MUR) -  within the PRIN 2022 program, CUP: E53C24002330001. This research has been partly performed in the framework of the MIUR Excellence Department Project awarded to Dipartimento di Matematica, Università di Genova, CUP D33C23001110001. 
{All the authors thank Michele Piana and Frank Filbir for discussions. }

\bibliographystyle{unrst}

\begin{thebibliography}{10}

\bibitem{Massone1}
{\sc S.~Allavena, M.~Piana, F.~Benvenuto, and A.~M. Massone}, {\em An interpolation/extrapolation approach to {X}-ray imaging of solar flares}, Inverse Probl. Imag., 6 (2012), p.~147.

\bibitem{Perracchionepolito}
{\sc G.~Audone, F.~{Della Santa}, E.~Perracchione, and S.~Pieraccini}, {\em {A recipe based on Lebesgue functions for learning Variably Scaled Kernels via Discontinuous Neural Networks ($\delta$NN-VSKs)}}, to appear on J. Comput. Appl. Math.,  (2025).

\bibitem{bertero2006inverse}
{\sc M.~Bertero and M.~Piana}, {\em Inverse problems in biomedical imaging: modeling and methods of solution}, Complex systems in biomedicine,  (2006), pp.~1--33.

\bibitem{Bozzini1}
{\sc M.~Bozzini, L.~Lenarduzzi, M.~Rossini, and R.~Schaback}, {\em Interpolation with variably scaled kernels}, IMA J. Numer. Anal., 35 (2015), pp.~199--219.

\bibitem{vskmpi}
{\sc S.~De~Marchi, W.~Erb, F.~Marchetti, E.~Perracchione, and M.~Rossini}, {\em Shape-driven interpolation with discontinuous kernels: Error analysis, edge extraction, and applications in magnetic particle imaging}, SIAM J. Sci. Comput., 42 (2020), pp.~B472--B491.

\bibitem{deng2012optimal}
{\sc X.~Deng, W.~Coles, G.~Hobbs, M.~Keith, R.~Manchester, R.~Shannon, and J.~Zheng}, {\em Optimal interpolation and prediction in pulsar timing}, Monthly Notices of the Royal Astronomical Society, 424 (2012), pp.~244--251.

\bibitem{easton2010fourier}
{\sc R.~L. Easton~Jr}, {\em Fourier Methods in Imaging}, John Wiley \& Sons, 2010.

\bibitem{engl1996regularization}
{\sc H.~W. Engl, M.~Hanke, and A.~Neubauer}, {\em Regularization of inverse problems}, vol.~375, Springer Science \& Business Media, 1996.

\bibitem{fornberg2011stable}
{\sc B.~Fornberg, E.~Larsson, and N.~Flyer}, {\em Stable computations with gaussian radial basis functions}, SIAM J. Sci. Comput., 33 (2011), pp.~869--892.

\bibitem{fornberg2004stable}
{\sc B.~Fornberg and G.~Wright}, {\em Stable computation of multiquadric interpolants for all values of the shape parameter}, Comput. Math. Appl., 48 (2004), pp.~853--867.

\bibitem{gamper2008compressed}
{\sc U.~Gamper, P.~Boesiger, and S.~Kozerke}, {\em Compressed sensing in dynamic mri}, Magnetic Resonance in Medicine: An Official Journal of the International Society for Magnetic Resonance in Medicine, 59 (2008), pp.~365--373.

\bibitem{gary2023new}
{\sc D.~E. Gary}, {\em New insights from imaging spectroscopy of solar radio emission}, Annual Review of Astronomy and Astrophysics, 61 (2023), pp.~427--472.

\bibitem{khare2015fourier}
{\sc K.~Khare, M.~Butola, and S.~Rajora}, {\em Fourier Optics and Computational Imaging}, Springer, 2015.

\bibitem{lagendijk1988regularized}
{\sc R.~L. Lagendijk, J.~Biemond, and D.~E. Boekee}, {\em Regularized iterative image restoration with ringing reduction}, IEEE TSP, 36 (1988), pp.~1874--1888.

\bibitem{larkman2007parallel}
{\sc D.~J. Larkman and R.~G. Nunes}, {\em Parallel magnetic resonance imaging}, Physics in Medicine \& Biology, 52 (2007), p.~R15.

\bibitem{enlighten1658}
{\sc R.~P. Lin, B.~R. Dennis, G.~J. Hurford, D.~M. Smith, A.~Zehnder, P.~R. Harvey, D.~W. Curtis, D.~Pankow, P.~Turin, M.~Bester, A.~Csillaghy, M.~Lewis, N.~Madden, H.~F.~V. Beek, M.~Appleby, T.~Raudorf, J.~McTiernan, R.~Ramaty, E.~Schmahl, R.~Schwartz, S.~Krucker, R.~Abiad, T.~Quinn, P.~Berg, M.~Hashii, R.~Sterling, R.~Jackson, R.~Pratt, R.~D. Campbell, D.~Malone, D.~Landis, C.~P. Barrington-Leigh, S.~Slassi-Sennou, C.~Cork, D.~Clark, D.~Amato, L.~Orwig, R.~Boyle, I.~S. Banks, K.~Shirey, A.~K. Tolbert, D.~Zarro, F.~Snow, K.~Thomsen, R.~Henneck, A.~Mchedlishvili, P.~Ming, M.~Fivian, J.~Jordan, R.~Wanner, J.~Crubb, J.~Preble, M.~Matranga, A.~Benz, H.~Hudson, R.~C. Canfield, G.~D. Holman, C.~Crannell, T.~Kosugi, A.~G. Emslie, N.~Vilmer, J.~C. Brown, C.~Johns-Krull, M.~Aschwanden, T.~Metcalf, and A.~Conway}, {\em The {R}euven {R}amaty {H}igh-{E}nergy {S}olar {S}pectroscopic {I}mager ({RHESSI})}, Sol. Phy., 210 (2002), pp.~3 --32.

\bibitem{lingala2011accelerated}
{\sc S.~G. Lingala, Y.~Hu, E.~DiBella, and M.~Jacob}, {\em Accelerated dynamic mri exploiting sparsity and low-rank structure: kt slr}, IEEE transactions on medical imaging, 30 (2011), pp.~1042--1054.

\bibitem{Massa_2020}
{\sc P.~Massa, R.~Schwartz, A.~K. Tolbert, A.~M. Massone, B.~R. Dennis, M.~Piana, and F.~Benvenuto}, {\em {MEM}{\_}{GE}: {A} new maximum entropy method for image reconstruction from solar {X}-ray visibilities}, Astrophys. J., 894 (2020), p.~46.

\bibitem{massone2008regularized}
{\sc A.~Massone, M.~Piana, and M.~Prato}, {\em Regularized solution of the solar bremsstrahlung inverse problem: model dependence and implementation issues}, Inverse Probl. Sci. Eng., 16 (2008), pp.~523--545.

\bibitem{olson2017applied}
{\sc T.~Olson}, {\em Applied Fourier Analysis}, Springer, 2017.

\bibitem{Perracchione2023}
{\sc E.~Perracchione, F.~Camattari, A.~Volpara, P.~Massa, A.~M. Massone, and M.~Piana}, {\em Unbiased {CLEAN} for {STIX} in {S}olar {O}rbiter}, Astrophys. J. Suppl. S., 268 (2023).

\bibitem{perracchione2021feature}
{\sc E.~Perracchione, A.~M. Massone, and M.~Piana}, {\em Feature augmentation for the inversion of the fourier transform with limited data}, Inverse Probl., 37 (2021), p.~105001.

\bibitem{piana2022hard}
{\sc M.~Piana, A.~G. Emslie, A.~M. Massone, and B.~R. Dennis}, {\em Hard X-ray Imaging of Solar Flares}, vol.~164, Springer, 2022.

\bibitem{piana2007electron}
{\sc M.~Piana, A.~M. Massone, G.~Hurford, M.~Prato, A.~G. Emslie, E.~P. Kontar, and R.~A. Schwartz}, {\em Electron flux spectral imaging of solar flares through regularized analysis of hard x-ray source visibilities}, Astrophys. J., 665 (2007), p.~846.

\bibitem{prevost2022hyperspectral}
{\sc C.~Pr{\'e}vost, R.~A. Borsoi, K.~Usevich, D.~Brie, J.~C. Bermudez, and C.~Richard}, {\em Hyperspectral super-resolution accounting for spectral variability: Coupled tensor ll1-based recovery and blind unmixing of the unknown super-resolution image}, SIAM Journal on Imaging Sciences, 15 (2022), pp.~110--138.

\bibitem{ravishankar2015efficient}
{\sc S.~Ravishankar and Y.~Bresler}, {\em Efficient blind compressed sensing using sparsifying transforms with convergence guarantees and application to magnetic resonance imaging}, SIAM Journal on Imaging Sciences, 8 (2015), pp.~2519--2557.

\bibitem{sacchi}
{\sc M.~D. {Sacchi}, T.~J. {Ulrych}, and C.~J. {Walker}}, {\em Interpolation and extrapolation using a high-resolution discrete {F}ourier transform}, IEEE Trans. Signal Process., 46 (1998), pp.~31--38.

\bibitem{sourbron2013classic}
{\sc S.~P. Sourbron and D.~L. Buckley}, {\em Classic models for dynamic contrast-enhanced mri}, NMR in Biomedicine, 26 (2013), pp.~1004--1027.

\bibitem{tsao2012mri}
{\sc J.~Tsao and S.~Kozerke}, {\em Mri temporal acceleration techniques}, Journal of Magnetic Resonance Imaging, 36 (2012), pp.~543--560.

\bibitem{volpara2024ris}
{\sc {Volpara, A.}, {Lupoli, A.}, {Filbir, F.}, {Perracchione, E.}, {Maria Massone, A.}, and {Piana, M.}}, {\em {RIS: Regularized imaging spectroscopy for STIX on board Solar Orbiter}}, A\&A, 694 (2025), p.~A47.

\bibitem{zhang2022survey}
{\sc J.~Zhang, R.~Su, Q.~Fu, W.~Ren, F.~Heide, and Y.~Nie}, {\em A survey on computational spectral reconstruction methods from rgb to hyperspectral imaging}, Scientific reports, 12 (2022), p.~11905.

\end{thebibliography}

\end{document}